\theoremstyle{plain}
\newtheorem{theorem}{Theorem}
\newtheorem{lemma}[theorem]{Lemma}
\newtheorem{corollary}[theorem]{Corollary}
\theoremstyle{definition}
\newtheorem{definition}[theorem]{Definition}
\theoremstyle{remark}
\newtheorem{rem}[theorem]{Remark}
\begin{document}

\title{\bf Polyhedra with few 3-cuts are hamiltonian}

\date{}

\author{G.~Brinkmann and C.T.~Zamfirescu\thanks{Carol T. Zamfirescu is a PhD fellow at Ghent University on the BOF (Special Research Fund) scholarship 01DI1015}\\
\small Department of Applied Mathematics, Computer Science \& Statistics \\[-0.8ex]
\small  Ghent University, 9000 Ghent, Belgium \\[-0.8ex]
\small\texttt Gunnar.Brinkmann@ugent.be, czamfirescu@gmail.com
}

\maketitle

\begin{abstract}

In 1956, Tutte showed that every planar 4-connected graph is
hamiltonian. In this article, we will generalize this result
and prove that polyhedra with at most three $3$-cuts
are hamiltonian. In 2002 Jackson and Yu
have shown this result for the subclass of triangulations.
We also prove that polyhedra with at most
four $3$-cuts have a hamiltonian path. It is well known that for each $k
\ge 6$ non-hamiltonian polyhedra with $k$ $3$-cuts exist. We give
computational results on lower bounds on the order of a possible
non-hamiltonian polyhedron for the remaining open cases of polyhedra
with four or five $3$-cuts.

\end{abstract}

\bigskip\noindent\textbf{Keywords:} hamiltonian cycle, polyhedron, $3$-cut

\section*{Introduction}

One of the classic results in graph theory is Whitney's theorem from 1931
that 4-connected triangulations of the plane are hamiltonian \cite{Wh:31}.
This result was generalized by Tutte in 1956 showing that all $4$-connected
planar graphs are hamiltonian \cite{Tutte56}. In the meantime several stronger
versions of Whitney's theorem have been proven, with one of the strongest
results a theorem by Jackson and Yu that a hamiltonian cycle
exists even if there are up to three $3$-cuts in a triangulation \cite{JY:02}.
Although also Tutte's theorem has been generalized in several ways -- e.g.
that in a $4$-connected plane graph a hamiltonian cycle through any $2$ edges exists
\cite{Sanders97} -- the theorem by Jackson and Yu was not generalized to
all $3$-connected plane graphs with at most three $3$-cuts. We will give this proof in this article.
It should be mentioned that the theorem of Jackson and Yu is not only about
the number of $3$-cuts, but also about their relative position -- encoded by
a {\em decomposition tree}. Such decomposition trees, which are unique
for triangulations, are not defined for general plane graphs, so only the part
about the number of $3$-cuts can be generalized.

\bigskip

In this article we will use the word {\em polyhedron} for $3$-connected plane graphs
and if $G=(V,E)$ for a set $V'\subseteq V$ of vertices we write $G-V'$ to denote the subgraph
induced by the vertices in $V-V'$ and for a set $E'$ of edges we write $G-E'$ to
denote the graph $(V,E-E')$.

In section~\ref{sec:basic} we give basic definitions and results that are tools used
in the proofs in section~\ref{sec:main} where the main results are given and
proven. The central theorem in that section is Theorem~\ref{thm:3cuts} stating that
every polyhedron with at most three $3$-cuts is hamiltonian.
This theorem is also used to prove that every polyhedron
with at most four $3$-cuts is traceable. In section~\ref{sec:structure} we prove that
for the cases with four or five $3$-cuts, where it is not yet known whether
non-hamiltonian polyhedra exist, possible non-hamiltonian examples are $1$-tough,
so {\em trivial} examples as for six or more $3$-cuts do not exist. We also prove the corresponding
results for non-traceable polyhedra. Finally, in section~\ref{sec:computational} we give the
results of a computer search for non-hamiltonian polyhedra with four or five $3$-cuts
and non-traceable polyhedra with at most seven $3$-cuts.

\section{Basic definitions and results}\label{sec:basic}

\begin{definition}

\begin{itemize}

\item A vertex cutset in a graph $G=(V,E)$ is a set $S\subset V$ so that $G-S$ has more components
than $G$.\\
A vertex cutset of size $k$ in a polyhedron $G$ is called {\em trivial}, if it splits the
graph into two components, one of which is a single vertex of degree $k$ in $G$.

\item We call paths between two vertices $v,w$ {\em vertex disjoint} if they only share
the vertices $v$ and $w$. If $X$ is a set of at least two vertices, we call paths starting at $v$ and ending in
a vertex of $X$ {\em vertex disjoint} if they only share $v$.

\item A polyhedron $G$ is called {\em essentially 4-connected } if
all $3$-cuts of $G$ are trivial. This implies that between two vertices of degree larger
than $3$ there are always at least $4$ vertex disjoint paths.

\end{itemize}

\end{definition}

\begin{rem}\label{rem:notwo3intriangle}

An essentially 4-connected  polyhedron $G$ with more than $6$ vertices
does not contain
two vertices of degree $3$ that share an edge of a triangle.

\end{rem}

The (short) proof is left to the reader.

%
%

\begin{definition}

Let $G=(V,E)$ be a polyhedron and $\{u,v,w\}$ a $3$-cut in $G$.

\begin{itemize}

\item If $(V',E')$ is a component of $G-\{u,v,w\}$, then the subgraph of
$G$ induced by $V'\cup \{u,v,w\}$ is called a {\em closed component} of
$G-\{u,v,w\}$.

\item If $(V',E')$ is a closed component of $G-\{u,v,w\}$,
then\\ $(V',E'\cup\{\{u,v\},\{v,w\},\{w,u\}\})$ is called an {\em edge closed
component} of $G-\{u,v,w\}$.

\end{itemize}

\end{definition}

Note that for a $3$-cut $\{u,v,w\}$ in a polyhedron $G$ always exactly two
closed components of $G-\{u,v,w\}$ exist. A third component would imply a plane embedding of
a subdivision of $K_{3,3}$. The three vertices $u,v,w$ can not be contained in the boundary
of the same face of $G$, as otherwise a vertex in the interior of the face can be connected to all
three vertices -- allowing a plane embedding of $K_{3,3}$.

\begin{lemma}\label{lem:edgeclosed}

Let $G=(V,E)$ be a polyhedron with $k$ $3$-cuts and $\{u,v,w\}$ a $3$-cut in $G$.
Let $G'=(V',E')$ be an edge closed
component of $G-\{u,v,w\}$.
Then we have:.

\begin{itemize}

\item $G'$ is planar and the vertices $u,v,w$ form a facial triangle in the (unique) embedding
of $G'$.

\item Edge closed
components of $G-\{u,v,w\}$ are polyhedra.

\item For any two vertices in $G'$ that are not both
in $\{u,v,w\}$, there are
at least as many vertex disjoint paths in $G'$ as there are in $G$.

\item Each $3$-cut in $G'$ is also a $3$-cut in $G$.

\item All edge closed
components of $G-\{u,v,w\}$ together have at most $(k-1)$ $3$-cuts.

\end{itemize}

\end{lemma}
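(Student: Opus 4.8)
The plan is to prove the bulleted claims in the order: planarity together with the facial triangle; the disjoint-paths statement; then $3$-connectivity (which with planarity gives that $G'$ is a polyhedron); and finally the two statements about $3$-cuts. Write $A$ for the vertex set of the open component of $G-\{u,v,w\}$ contained in $G'$, so $V'=A\cup\{u,v,w\}$, and write $B$ for the vertex set of the other open component; thus $B\neq\emptyset$, $V'\cap B=\emptyset$, and since $G$ is $3$-connected no proper subset of $\{u,v,w\}$ is a cutset, so in particular each of $u,v,w$ has a neighbour in $A$ and a neighbour in $B$ (else a two-element subset of $\{u,v,w\}$ would be a cutset). For planarity, delete from a plane embedding of $G$ all vertices of $B$ and their incident edges; this leaves a plane embedding of the connected graph $G[V']$, all of whose faces are open discs. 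Since $B$ is connected, $B$ together with all edges from $B$ to $\{u,v,w\}$ lies in a single face $f$ of this embedding, and since each of $u,v,w$ has a neighbour in $B$ all three are incident with $f$. Drawing inside the disc $f$ three pairwise non-crossing arcs joining $u,v,w$ in pairs cuts off a triangular face and produces exactly $G'$ (if one of the edges $uv,vw,wu$ already lies in $G$, one first checks it lies on the boundary of $f$, so that no parallel edge is created), with $u,v,w$ a facial triangle; uniqueness of the embedding then follows once $3$-connectivity is known. This embedding bookkeeping is the step I expect to require the most care.

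For the disjoint-paths statement, let $p,q\in V'$ not both in $\{u,v,w\}$ and take $t$ pairwise vertex-disjoint $p$--$q$ paths in $G$. Since $p,q\notin B$, for each such path every maximal subpath all of whose internal vertices lie in $B$ has its two endpoints, which are distinct, in $\{u,v,w\}$; replacing each such subpath by the corresponding edge $uv$, $vw$ or $wu$ of $G'$ yields $t$ pairwise vertex-disjoint $p$--$q$ paths in $G'$, because we only removed internal vertices lying in $B$ and each inserted triangle edge has both of its endpoints already on the path into which it is inserted, so no two of the new paths meet in a vertex where the old ones did not. Hence $G'$ has at least as many vertex-disjoint $p$--$q$ paths as $G$.

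Next, $G'$ is $3$-connected. Suppose $\{a,b\}$ were a $2$-cut of $G'$. If $A\not\subseteq\{a,b\}$, choose $p\in A\setminus\{a,b\}$ (so $p\notin\{u,v,w\}$) and $q$ in a component of $G'-\{a,b\}$ different from that of $p$; by the previous paragraph and the $3$-connectivity of $G$ there are at least three vertex-disjoint $p$--$q$ paths in $G'$, contradicting that $\{a,b\}$ separates $p$ from $q$. Otherwise $A\subseteq\{a,b\}$, so $|A|\leq 2$; if $|A|=1$ then (again using that $G$ is $3$-connected) the single vertex of $A$ is adjacent to all of $u,v,w$ and $G'=K_4$, and if $|A|=2$ then $\{a,b\}=A$ and $G'-\{a,b\}=G'[\{u,v,w\}]$ is connected; in both cases $\{a,b\}$ is not a $2$-cut, a contradiction. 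As $|V(G')|\geq 4$, it follows that $G'$ is $3$-connected, hence, with planarity, a polyhedron.

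Finally, let $S$ be a $3$-cut of $G'$. Since $G'-\{u,v,w\}=G[A]$ is connected, $S\neq\{u,v,w\}$, so $\{u,v,w\}\setminus S\neq\emptyset$; as $G'$ is $3$-connected, $G'-S$ has exactly two components, and the vertices of $\{u,v,w\}\setminus S$, being pairwise joined by surviving triangle edges, all lie in one of them, say $X$, while the other component $Y$ is contained in $A$. The subgraph of $G$ induced on $Y$ coincides with the one induced in $G'$, and every $G$-neighbour of $Y$ lies in $Y\cup S$ (attaching $B$ back does not change this, since the only neighbours of $B$ outside $S$ lie in $X$); as $X$ and $Y$ are non-empty this shows $G-S$ is disconnected, so $S$ is a $3$-cut of $G$. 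For the count, there are exactly two edge closed components $G_1',G_2'$ of $G-\{u,v,w\}$; any set that is a $3$-cut of both would lie in $V(G_1')\cap V(G_2')=\{u,v,w\}$ and hence equal $\{u,v,w\}$, which is a $3$-cut of neither. So the $3$-cuts of $G_1'$ and those of $G_2'$ are, by the preceding, pairwise distinct $3$-cuts of $G$, none equal to the $3$-cut $\{u,v,w\}$ of $G$; therefore their total number is at most $k-1$.
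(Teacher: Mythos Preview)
Your proof is correct and follows essentially the same approach as the paper: both reroute $G$-paths that leave $V'$ through the added triangle edges to establish the disjoint-paths claim, and derive the remaining assertions from it. The minor presentational differences---arguing $3$-connectivity by ruling out a $2$-cut rather than directly exhibiting three internally disjoint paths between every pair, and showing that a $3$-cut $S$ of $G'$ isolates a component $Y\subseteq A$ in $G$ rather than invoking the path transformation once more---do not constitute a genuinely different method.
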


\begin{proof}

For $y \in V- V'$ there are $3$ vertex disjoint paths from $y$ to $\{u,v,w\}$
and as $\{u,v,w\}$ is a cut, none of these paths contains an element of
$V'-\{u,v,w\}$. The union of these paths is connected, so removing
$V- V'$ from $G$, $u,v,w$ are in the same face, so the edges
$\{u,v\},\{v,w\},\{w,u\}$ can be added (if not yet present)
inside the face where they form a facial triangle. So $G'$ is planar.
The fact that this embedding is unique will follow from the fact that $G'$
is $3$-connected.

We will now prove that for any two vertices $a,b\in V'$, $a\not\in \{u,v,w\}$
and a path $P=v_1, \dots ,v_n$ from $a$ to $b$ in $G$,
there is a path $T(P)$ from $a$ to $b$ in $G'$ using only vertices from the set
$\{v_1, \dots ,v_n\}$. We will also show that $T(P)$ uses at least $3$ vertices unless
$a=v_1$ and $v_2=b$. This implies that for a second, vertex disjoint, path $P'$ from
$a$ to $b$ we have that $P\not= P'$ implies $T(P)\not= T(P')$.
This means that for any set $\{P_1,\dots ,P_k\}$ of vertex disjoint paths
from $a$ to $b$ in $G$ the set $\{T(P_1),\dots ,T(P_k)\}$ has equal size
and is also vertex disjoint.
Assume that the path $P$ from $a$ to $b$ in $G$ is given by
$a=v_1, \dots ,v_n=b$.
Unless $v_2=b$, $v_2$ is also a vertex from
$V'$ different from $a,b$.
If $P$ only contains vertices from
$G'$, we have found our path.  Otherwise it contains a first edge
$\{v_i,v_{i+1}\}$ not in $G'$. W.l.o.g. we have $v_i=u$ and $v_{i+1}\not\in V'$.
Then there is some maximal $j>i$ with $v_j\not\in V'$ and $v_{j+1}\in \{v,w\}$.
Replacing $v_i,v_{i+1},\dots ,v_j,v_{j+1}$ by $v_i,v_{j+1}$ we have a path in
$G'$ that uses only vertices from $\{v_1, \dots ,v_n\}$. So for
$\{a,b\}\not\subset \{u,v,w\}$ there are at least $3$
vertex disjoint paths from $a$ to $b$ as $G$ is a polyhedron.

For $\{a,b\}\subset \{u,v,w\}$ -- w.l.o.g. $\{u,v,w\}=\{a,b,w\}$
-- $a,b$ and $a,w,b$ are two disjoint paths from $a$ to $b$ along the triangle.
As for each vertex $z\not\in \{u,v,w\}$ there are three
vertex disjoint paths from $z$ to $\{a,b,w\}$, there is also at least
one path from $a$ to $b$ not using $w$. So there are at least $3$
vertex disjoint paths from $a$ to $b$.

These arguments imply that $G'$ is $3$-connected, so the embedding is unique
and $\{u,v\}$, $\{v,w\}$, $\{w,u\}$ form a facial triangle in each embedding.

If $\{u',v',w'\}$ is a $3$-cut in $G'$ and $a,b$ are vertices in different components
of $G'-\{u',v',w'\}$, then at least one of them is not contained in $\{u,v,w\}$
and there is also no path from $a$ to $b$ in
$G-\{u',v',w'\}$, as there would be a corresponding path on the subset
of vertices from $a$ to $b$ in $G'-\{u',v',w'\}$.
This implies that $\{u',v',w'\}$ is also a cut in $G$. As
each $3$-cut in an edge closed component is also a $3$-cut in $G$, but $\{u,v,w\}$
is a $3$-cut in $G$ but not in any of the edge closed components,
this proves that there are at most $(k-1)$ $3$-cuts
in all edge closed components together.

\end{proof}

The number of all $3$-cuts in the edge
closed components together can be smaller than $k-1$. An easy example
is a polyhedron with a vertex $v$ of degree $3$ adjacent to
3 other vertices of degree $3$ where the neighbourhoods of these $4$ vertices
are the only $3$-cuts. Choosing the  neighbourhood of $v$ as a cut, we get
one copy of $K_4$ and one 4-connected edge closed component,
showing that the number of $3$-cuts in the edge
closed components can be smaller than $k-1$. On the other hand choosing
another cut, the edge closed component has two $3$-cuts left, showing  that there is no
unique decomposition tree like for triangulations (see \cite{JY:02}).

While in triangulations $3$-cuts are separating triangles
that lie properly inside each other, the relative position
in general polyhedra can be more complicated. It is e.g. possible that vertices
of a $3$-cut $\{u,v,w\}$ end up in different edge closed components of a $3$-cut
$\{u',v',w'\}$. This makes it worthwhile to explicitly state and prove the following
lemma, which is trivial for triangulations:

\begin{lemma}\label{lem:existstrivecc}

Each polyhedron $G=(V,E)$ with $3$-cuts has a $3$-cut $\{u,v,w\}$, so that at least
one edge closed component $G'$ of $G-\{u,v,w\}$ has no $3$-cuts -- that is: $G'$
is $4$-connected
or isomorphic to $K_4$.

\end{lemma}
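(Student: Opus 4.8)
The plan is to pick any $3$-cut of $G$ and argue by induction on the number of $3$-cuts, using Lemma~\ref{lem:edgeclosed} to pass to an edge closed component. Concretely, let $\{u,v,w\}$ be any $3$-cut of $G$, and let $G_1'$ and $G_2'$ be the two edge closed components of $G-\{u,v,w\}$. By the last item of Lemma~\ref{lem:edgeclosed}, if $G$ has $k$ $3$-cuts then $G_1'$ and $G_2'$ together have at most $k-1$ $3$-cuts; in particular at least one of them, say $G_1'$, has strictly fewer $3$-cuts than $G$. Since $G_1'$ is itself a polyhedron (second item of Lemma~\ref{lem:edgeclosed}), we may recurse on $G_1'$. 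The recursion terminates because the number of $3$-cuts strictly decreases at each step, and when we reach a polyhedron with no $3$-cuts we are done: such a polyhedron is $4$-connected, or -- in the degenerate case where it has only four vertices -- it is $K_4$.

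There is one subtlety to address so that the induction is actually valid: I must make sure the edge closed component I pass to really is a \emph{proper} subinstance, i.e. that the recursion cannot stall. The point is that the $3$-cut $\{u,v,w\}$ we removed is, by the fourth item of Lemma~\ref{lem:edgeclosed}, \emph{not} a $3$-cut of either $G_1'$ or $G_2'$ (it has become a facial triangle in each), while every $3$-cut of $G_i'$ is a $3$-cut of $G$. Hence the total count of $3$-cuts over $G_1'$ and $G_2'$ is at most $k-1$, so $\min(|\mathcal{C}(G_1')|,|\mathcal{C}(G_2')|)\le \lfloor (k-1)/2\rfloor < k$, where $\mathcal{C}(\cdot)$ denotes the set of $3$-cuts. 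Thus the component with fewer $3$-cuts is a strictly smaller instance, and induction on $k$ applies cleanly. (One could even always get down by at least one, which is all that is needed.)

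Finally I would unwind the recursion to state the conclusion about $G$ itself. Having chosen at each stage the edge closed component with the smaller number of $3$-cuts, after finitely many steps we arrive at an edge closed component $H$ of some polyhedron in the chain with zero $3$-cuts; $H$ is an edge closed component of a $3$-cut of (a subgraph derived from) $G$, and by construction $H$ is either $4$-connected or isomorphic to $K_4$. Tracking back just one level, $H$ is an edge closed component of $G-\{u,v,w\}$ for the last $3$-cut $\{u,v,w\}$ chosen \emph{in $G$} along the chain (since passing to edge closed components only removes $3$-cuts, the relevant cut is a genuine $3$-cut of $G$), which is exactly the statement. The main (and really the only) obstacle is bookkeeping: one has to be careful that the object $H$ produced deep in the recursion is literally an edge closed component of a $3$-cut \emph{of $G$}, and not merely of some intermediate polyhedron. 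This is handled by the observation that every $3$-cut of an edge closed component of $G-\{u,v,w\}$ is a $3$-cut of $G$ (fourth item of Lemma~\ref{lem:edgeclosed}), so the chain of $3$-cuts we use all live in $G$, and the very first edge closed component we split off already suffices once we choose the split so that one side is $4$-connected or $K_4$; if neither side is, we repeat inside the smaller side, and an easy induction collapses the whole argument back to a single $3$-cut of $G$.
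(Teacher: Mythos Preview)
Your inductive strategy is natural, but the step you flag as ``bookkeeping'' is in fact the entire content of the lemma, and you have not actually carried it out. Knowing that the $3$-cut $\{u',v',w'\}$ found inside $G_1'$ is also a $3$-cut of $G$ is not enough: you need that the $4$-connected edge closed component $H$ of $G_1'-\{u',v',w'\}$ is literally an edge closed component of $G-\{u',v',w'\}$, and this can fail. If $H$ happens to be the side containing the facial triangle $(u,v,w)$ of $G_1'$ --- and the induction hypothesis gives you no control over which side is $4$-connected --- then the edge closed component of $G-\{u',v',w'\}$ on that side is not $H$ but $H$ with $G_2'$ glued back in along $(u,v,w)$; this larger polyhedron has $\{u,v,w\}$ as a $3$-cut and is therefore not $4$-connected. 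The \emph{other} side $H'$ of $G_1'-\{u',v',w'\}$ really is an edge closed component of $G-\{u',v',w'\}$, but nothing says $H'$ is $4$-connected. So the cut your induction hands you may simply fail to witness the lemma for $G$, and the phrase ``an easy induction collapses the whole argument'' does not explain how to recover from this.

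The paper sidesteps the issue with an extremal argument instead of induction on the number of $3$-cuts: choose a $3$-cut $\{u,v,w\}$ whose edge closed component $G'$ has the \emph{minimum number of vertices} among all edge closed components of all $3$-cuts of $G$. If $G'$ contained a $3$-cut $\{u',v',w'\}$, pick $w\in\{u,v,w\}\setminus\{u',v',w'\}$ and let $G''$ be the edge closed component of $G'-\{u',v',w'\}$ \emph{not containing} $w$. The paper then verifies carefully that $G''$ is also an edge closed component of $G-\{u',v',w'\}$, contradicting minimality of $G'$. That verification --- that the side avoiding $w$ lifts back to $G$ --- is exactly the ingredient missing from your proposal, and the deliberate choice of the side not containing $w$ is what makes it go through.
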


\begin{proof}

Let $\{u,v,w\}$ be a $3$-cut, so that one of the edge closed components of $G-\{u,v,w\}$
has minimal cardinality among all  edge closed components of $3$-cuts of $G$.
Let $G'=(V',E')$ be this edge closed component and assume that $G'$ contains
a $3$-cut $\{u',v',w'\}$. At least one of the vertices $\{u,v,w\}$ -- w.l.o.g. $w$ --
is not in $\{u',v',w'\}$. Let $G''=(V'',E'')$ be the edge closed component
of $G'-\{u',v',w'\}$ not containing $w$. We will prove that $G''$ is also
an edge closed component of $G-\{u',v',w'\}$ -- contradicting
the minimality of $G'$.

Lemma~\ref{lem:edgeclosed} implies that $\{u',v',w'\}$ is also a $3$-cut in $G$.
As edges between vertices of $\{u,v,w\}$ are only in $G''$ if they are between vertices
of $\{u',v',w'\}$, all edges in $G''-\{u',v',w'\}$ are also in $G-\{u',v',w'\}$.
This implies that there is an edge closed component of $G-\{u',v',w'\}$
containing $G''$ and due to the minimality of $G'$ this component must
properly contain $G''$. So there is a path $v_1, \dots ,v_k$ in $G-\{u',v',w'\}$
from a vertex
$v_1$ in $G''-\{u',v',w'\}$ to a vertex $v_k$ not in $G''-\{u',v',w'\}$, so that
$v_1, \dots ,v_{k-1}$ are all in $G''-\{u',v',w'\}$. Note that $\{u',v',w'\}$ is not
a cutset in any of its edge closed components. As the edge $\{v_{k-1},v_k\}$
is not in $G'$, $v_k$ is not in $G'$, so it must be in the other edge closed
component of $G-\{u,v,w\}$, which is impossible as $v_k\not\in \{u,v,w\}$.
So we have a contradiction and $G'$ does not contain a $3$-cut.

\end{proof}

\begin{corollary}\label{cor:faceroundedge}

If $G=(V,E)$ is a polyhedron and $\{u,v,w\}$ is a $3$-cut in $G$, then
in $G-\{\{u,v\}\}$ (which is equal to $G$ if $\{u,v\}\not\in E$)
the vertices $u$ and $v$ share a face
$u,u_1,\dots ,u_k,v,v_1,\dots ,v_m$ with $k\ge 1, m\ge 1$ and
for $1\le i\le k$, $1\le j\le m$ the vertices $u_i$ and $v_j$ belong to
different components of $G-\{u,v,w\}$.

\end{corollary}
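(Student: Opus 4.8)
The plan is to work inside the graph $H := G - \{\{u,v\}\}$, which is still a plane graph (we simply delete the edge $uv$ if it was present), and to exhibit a face of $H$ whose boundary walk contains both $u$ and $v$, then read off the required structure from that face. First I would recall from the discussion preceding Lemma~\ref{lem:edgeclosed} that $G - \{u,v,w\}$ has \emph{exactly two} components, call them $A$ and $B$, and that $u,v,w$ do not all lie on a common face of $G$. Consider the closed component $G[A \cup \{u,v,w\}]$; by the edge-closed-component analysis it is planar and (after adding the triangle edges) $u,v,w$ bound a face there, and similarly for $B$. The key point is that in $G$ itself, removing $w$ from this picture, $u$ and $v$ become "exposed" to both sides: since $w$ is the only cut vertex of $\{u,v,w\}$ separating $A$ from $B$ that we are not deleting, in $G - \{w\}$ the vertices $u$ and $v$ still lie on a common face — the face obtained by merging the two triangular faces across the edge $uv$ (if present) or across the $w$-free side.

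More concretely, I would argue directly about faces of $H$. Since $\{u,v,w\}$ is a cut of $G$ with sides $A$ and $B$, and $u,v,w$ are not cofacial, consider a plane embedding of $G$. The vertex $w$ together with the two closed components shows that there is a face $F_A$ of $G[A\cup\{u,v,w\}]$ bounded by a walk through $u$, $v$, $w$ in which, between $u$ and $v$, only vertices of $A$ (and possibly the edge $uv$) appear; symmetrically a face $F_B$ on the $B$-side. In $G$, along the cyclic arrangement of edges and faces around $u$, the neighbours in $A$ and the neighbours in $B$ each form a contiguous block, with $v$ and $w$ as the two "boundary" neighbours between the blocks — and likewise around $v$. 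Deleting the edge $uv$ (if it exists) merges the two faces of $G$ incident with $uv$; if $uv\notin E$, the face of $G$ "between the $A$-block and the $B$-block at both $u$ and $v$ on the side away from $w$" is already a single face. In either case we obtain a face $F$ of $H$ whose boundary walk, read starting at $u$, has the form $u, u_1,\dots,u_k, v, v_1,\dots, v_m$ where $u_1,\dots,u_k$ lie in the $A$-side and $v_1,\dots,v_m$ lie in the $B$-side (or vice versa); here $k,m\ge 1$ because $u$ is not adjacent to $v$ in $H$ and both $A$ and $B$ are nonempty, so $u$ has at least one neighbour strictly inside each side appearing on $F$, and similarly for $v$. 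Relabelling, this is exactly the claimed face, and $u_i\in A$, $v_j\in B$ put $u_i$ and $v_j$ in different components of $G-\{u,v,w\}$.

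The main obstacle, and the step I would spend the most care on, is verifying that the $u_i$ and the $v_j$ really do land in different components of $G-\{u,v,w\}$ — i.e. that the boundary walk of $F$ does not "leak" from the $A$-side to the $B$-side without passing through $u$, $v$, or $w$. This is where planarity and the cut property must be combined: any path along $\partial F$ from a $u_i\in A$ to a $v_j\in B$ that avoids $u,v,w$ would be a path in $G-\{u,v,w\}$ joining $A$ to $B$, contradicting that $\{u,v,w\}$ is a cut. Since the boundary walk of $F$ between $u$ and $v$ (on each of its two arcs) is such a path minus its endpoints, each arc must lie entirely in one side; and the two arcs lie in different sides because $F$ was built precisely by merging/using the faces separating the $A$-block from the $B$-block around both $u$ and $v$. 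The degenerate bookkeeping — confirming $k\ge 1$ and $m\ge 1$ rather than $k=0$, which would force $uv$ to be an edge of $H$ contrary to its deletion, or an isolated side contrary to $A,B\neq\emptyset$ — is the routine part I would state but not belabour.
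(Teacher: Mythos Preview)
Your overall strategy is sound: locate a face of $H=G-\{\{u,v\}\}$ whose boundary cycle passes through both $u$ and $v$, and then argue that each of the two boundary arcs between $u$ and $v$ must lie inside a single component of $G-\{u,v,w\}$ (since an arc touching both sides would give a path joining them). The difficulty is entirely in the first step, and this is where your argument has a real gap. Your assertion that around $u$ the $A$-neighbours and $B$-neighbours form contiguous blocks ``with $v$ and $w$ as the two boundary neighbours between the blocks'' is not justified and is not literally true: nothing forces $u$ to be adjacent to $v$ or to $w$ in $G$. More seriously, even granting contiguity at $u$ and at $v$ separately, you assert but do not prove that the face ``between the $A$-block and the $B$-block on the side away from $w$'' seen at $u$ is the \emph{same} face of $H$ as the one seen at $v$; nor do you verify that $w$ is absent from its boundary, which your arc argument in the final paragraph tacitly needs (an arc passing through $w$ could change sides without contradicting the cut property). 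Finally, showing that the two arcs lie in \emph{different} sides is deferred to the sentence ``because $F$ was built precisely by merging\dots'', which is circular since the construction of $F$ is exactly what is in question.

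The paper closes this gap by actually carrying out the gluing you gesture at in your first paragraph and then abandon. By Lemma~\ref{lem:edgeclosed} each edge-closed component is a polyhedron in which $u,v,w$ form a facial triangle; identifying the two along that triangle gives a plane embedding of $G'=(V,E\cup\{\{u,v\},\{v,w\},\{w,u\}\})$ with the two components of $G-\{u,v,w\}$ on opposite sides of the (now separating) triangle. In $G'$ the edge $\{u,v\}$ borders exactly one face $v,u,u_1,\dots,u_k$ on one side and one face $u,v,v_1,\dots,v_m$ on the other (so automatically $k,m\ge1$ and $w$ appears in neither, since in a polyhedron two distinct faces share at most an edge). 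Deleting $\{u,v\}$ merges these into the desired face. Because $(V,E\cup\{\{u,v\}\})$ is $3$-connected its plane embedding is unique, hence coincides with the sub-embedding inherited from $G'$, and therefore this merged face is genuinely a face of $G-\{\{u,v\}\}$. This is precisely the rigorous version of your ``merge the two triangular faces across $uv$'' intuition; to repair your direct argument you would essentially have to reproduce it.
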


\begin{proof}
Lemma~\ref{lem:edgeclosed} implies that in the two edge closed components
the vertices $u,v,w$ form a facial triangle. Identifying the two edge closed
components along the edges of this triangle gives an embedding of a graph
$G'=(V,E')=(V,E\cup \{\{u,v\}\{v,w\}\{w,u\}\})$ containing $G$ and with the vertices
of different components of $G-\{u,v,w\}$ on different sides of the triangle
$(u,v,w)$. If one face containing the edge $\{u,v\}$ is $v,u,u_1,\dots ,u_k$ (so $k\ge 1$
as the smallest possible face size is a triangle) and the other
is $u,v,v_1,\dots ,v_m$ (so $m\ge 1$), in $G''=(V,E'-\{u,v\})$
we get the face as described in the
statement. As $(V,E\cup \{u,v\})$ is $3$-connected, the embedding of
$(V,E\cup \{u,v\})$ is unique, so equivalent to the subembedding of $G'$ and
$u,u_1,\dots ,u_k,v,v_1,\dots ,v_m$ is also a face in $G-\{\{u,v\}\}$.
As $G$ is a subgraph of $G'$, for $1\le i\le k$, $1\le j\le m$ the vertices $u_i$ and $v_j$ belong to
different components of $G-\{u,v,w\}$.

\end{proof}

\begin{lemma}\label{lem:remove_vertices}

A polyhedron $G$ with $k$ $3$-cuts contains a spanning subgraph that can be obtained from a
$4$-connected polyhedron by deleting at most $k$ vertices.

\end{lemma}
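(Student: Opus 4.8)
The plan is to induct on the number $k$ of $3$-cuts of $G$. For $k=0$ the polyhedron $G$ is already $4$-connected (we tacitly set aside the degenerate case $G=K_4$), so we may take the $4$-connected polyhedron to be $G$ itself and delete nothing. For the inductive step it is enough to exhibit a polyhedron $\widetilde G$ with $V(\widetilde G)=V(G)\cup\{x\}$ for a single new vertex $x$, with $\widetilde G-x$ a subgraph of $G$, and with at most $k-1$ $3$-cuts: applying the induction hypothesis to $\widetilde G$ produces a $4$-connected polyhedron $H'$ and a set $\widetilde W$ of at most $k-1$ vertices with $H'-\widetilde W$ a spanning subgraph of $\widetilde G$, and then $W:=\widetilde W\cup\{x\}$ has at most $k$ vertices, $V(H'-W)=V(G)$, and every edge of $H'-W$ lies in $\widetilde G-x\subseteq G$, so $H'-W$ is a spanning subgraph of $G$.

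To construct $\widetilde G$ I would invoke Lemma~\ref{lem:existstrivecc} to pick a $3$-cut $\{u,v,w\}$ of $G$ one of whose edge closed components is $4$-connected or $K_4$; let $A_1,A_2$ be the two components of $G-\{u,v,w\}$. Choose a pair from $\{u,v,w\}$, say $\{u,v\}$, which is a non-edge whenever $\{u,v,w\}$ does not induce a triangle, and arbitrary otherwise. By Corollary~\ref{cor:faceroundedge}, applied to $G$ and $\{u,v,w\}$, the graph $G-\{\{u,v\}\}$ with its inherited embedding has a face $u,u_1,\dots,u_k,v,v_1,\dots,v_m$ with $k,m\ge 1$ whose $u_i$ lie in one of $A_1,A_2$ and whose $v_j$ lie in the other. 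Let $\widetilde G$ be $G-\{\{u,v\}\}$ together with a new vertex $x$ placed inside this face and joined to all of $u,u_1,\dots,u_k,v,v_1,\dots,v_m$. This keeps the embedding plane; moreover the only cuts of size at most $2$ that deleting $uv$ could have introduced are forced to contain $w$ and to separate $u$ from $v$ (since $w$ is adjacent to both endpoints of the would-be bridge $uv$), and $x$ is adjacent to both $u$ and $v$, so $\widetilde G$ is again $3$-connected, hence a polyhedron, and clearly $\widetilde G-x=G-\{\{u,v\}\}\subseteq G$. Because $x$ has a neighbour in $A_1$ and a neighbour in $A_2$, the set $\{u,v,w\}$ is no longer a cut of $\widetilde G$.

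It remains to check that $\widetilde G$ acquired no $3$-cut that is not compensated by one it lost. A $3$-cut of $\widetilde G$ avoiding $x$ is a cut of $G-\{\{u,v\}\}$; any such cut that is not already a $3$-cut of $G$ must contain $w$, hence it is reconnected by the two-edge path through $x$ and is not a cut of $\widetilde G$ after all. A $3$-cut of $\widetilde G$ containing $x$ yields a $2$-cut $\{w,p\}$ of $G-\{\{u,v\}\}$ (again forced to contain $w$), which in turn exhibits $\{u,w,p\}$ as a genuine $3$-cut of $G$ cutting off the side containing $v$; this $3$-cut is destroyed in $\widetilde G$ by the path through $x$, and $\{x,w,p\}\mapsto\{u,w,p\}$ is an injection that never hits $\{u,v,w\}$. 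Hence $\widetilde G$ has at most $k-1$ $3$-cuts, which closes the induction.

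The main obstacle is precisely the triangle case of the last step: one must verify that deleting a triangle edge cannot lower the connectivity below $3$ once $x$ has been inserted, and that every $3$-cut newly created through $x$ can be charged to a distinct $3$-cut of $G$ that the surgery removes, dealing with the borderline configurations (for instance $p$ lying in $A_1$, or one side of the new cut being a single vertex, in which case one charges to $\{v,w,p\}$ instead) by a careful use of $3$-connectivity and of the face structure supplied by Corollary~\ref{cor:faceroundedge}.
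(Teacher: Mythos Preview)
Your construction and inductive framework match the paper's exactly: pick a $3$-cut $\{u,v,w\}$, delete the edge $uv$ if present (choosing $u,v$ non-adjacent whenever the cut is not a triangle), and stellate the face supplied by Corollary~\ref{cor:faceroundedge} with a new vertex $x$. The paper does not invoke Lemma~\ref{lem:existstrivecc} here; any $3$-cut works, and you never actually use that one edge closed component is $4$-connected.

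Your treatment of $3$-cuts of $\widetilde G$ that avoid $x$ is essentially correct, though the clause ``must contain $w$'' is a detour: the relevant fact is simply that such a cut separates $u$ from $v$ in $G-uv$, hence contains neither $u$ nor $v$ nor $x$, so the path $u\text{--}x\text{--}v$ reconnects the two sides and the set is not a cut of $\widetilde G$. So every $3$-cut of $\widetilde G$ avoiding $x$ is already a $3$-cut of $G$, and $\{u,v,w\}$ is no longer among them.

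The genuine gap is your handling of $3$-cuts of $\widetilde G$ containing $x$. You propose charging $\{x,w,p\}$ to $\{u,w,p\}$ (or $\{v,w,p\}$) and verifying that this target is a $3$-cut of $G$ destroyed in $\widetilde G$, with the map injective and missing $\{u,v,w\}$. You flag this as the main obstacle, and indeed your sketch does not close it: you have not shown that $\{u,w,p\}$ is a cut of $G$ when $p$ happens to isolate $u$, nor that $\{u,w,p\}$ fails to be a cut of $\widetilde G$, nor that distinct $\{x,w,p\}$ map to distinct targets. The paper sidesteps all of this by proving the stronger fact that \emph{no} such cut exists. In the triangle case, a $3$-cut of $\widetilde G$ through $x$ corresponds to a $2$-cut $\{w,p\}$ of $G-uv$ separating $u$ from $v$; since $p$ lies in one of the two components (say the exterior) of $G-\{u,v,w\}$, the other component (the interior) is untouched by $\{w,p\}$ and remains connected, and both $u$ and $v$ have neighbours in it because each cut vertex of a $3$-cut in a polyhedron has neighbours in both components. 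Hence $u$ and $v$ lie in the same component of $(G-uv)-\{w,p\}$, a contradiction. So $\widetilde G$ has no new $3$-cuts at all, and the induction closes without any charging.
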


\begin{proof}

We will prove this by induction in $k$. For $k=0$ the statement is trivial, so assume that $k>0$
and that $\{u,v,w\}$ is a $3$-cut in $G$.
If $u,v,w$ do not form a separating triangle, assume w.l.o.g. that  there is no edge $\{u,v\}$.
We will construct a graph $G'$ with at most
$k-1$ $3$-cuts in which a vertex can be removed to obtain $G-\{\{u,v\}\}$. By induction a spanning subgraph
of $G'$ can be obtained by removing at most $k-1$ vertices from a $4$-connected polyhedron,
proving the result.

With the notation of Corollary~\ref{cor:faceroundedge} for the face of $G-\{\{u,v\}\}$
containing $u$ and $v$ we add a new vertex $z$ in the interior of
the face $u,u_1,\dots ,u_k,v,v_1,\dots ,v_m$ and connect it to all vertices in the boundary
to obtain a graph $G'$. As $z$ connects vertices from the two different components of
$G-\{u,v,w\}$, the set $\{u,v,w\}$ is not a cutset any more.

Assume now that there is a $3$-cut $\{u',v',w'\}$ in $G'$ that is not already a cutset in $G$.
As the degree of $z$ is at least $4$, $z$ cannot form a trivial component and
each component contains at least one vertex from $G$.
Let $w_1\not= z, w_2\not= z$ be vertices from different components of $G'-\{u',v',w'\}$. So there is a path
from $w_1$ to $w_2$ in $G-\{u',v',w'\}$ which is not in $G'-\{u',v',w'\}$. As $\{u,v\}$ is the only
edge that might be in $G-\{u',v',w'\}$ but not in  $G'-\{u',v',w'\}$ this edge $\{u,v\}$
is contained in this path and therefore $\{u,v\}\cap \{u',v',w'\}=\emptyset $.
As  $\{u,v\}\in E(G)$ due to the choice of $u,v$ the vertices $u,v,w$ form a separating triangle
in $G$.
We also have $z\in \{u',v',w'\}$ as otherwise the edge $\{u,v\}$ could be replaced by the path $u,z,v$.
For the same reason $u$ and $v$ belong to different components in $G'-\{u',v',w'\}$.
This again implies that $w\in \{u',v',w'\}$, so w.l.o.g. $\{u',v',w'\}=\{u',z,w\}$ for some vertex $u'$.

As $G-\{u,v,w\}$ has exactly two components, the interior and exterior of the separating
triangle are connected and w.l.o.g. $u'$ is in the exterior. So the interior is also
connected in $G'-\{u',z,w\}$. As $u,v$ both have neighbours in the interior, they belong
to the same component -- a contradiction.

So there is no $3$-cut $\{u',v',w'\}$ in $G'$ that is not already a cutset in $G$ and $G'$
has at most $(k-1)$ $3$-cuts.

\end{proof}

A graph $G=(V,E)$ is {\em $k$-hamiltonian} if for each set $S\subset V$ with $|S|\le k$
the graph $G-S$ is hamiltonian.
In \cite{ThomasYu} Thomas and Yu prove the following result which was
originally conjectured by Plummer:

\begin{theorem}\label{thm:RYU}

$4$-connected planar graphs are $2$-hamiltonian.

\end{theorem}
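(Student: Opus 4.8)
The plan is to follow the Tutte-path / circuit-graph strategy underlying Thomassen's proof of Tutte's theorem, strengthening the statement carried through the induction until it is robust enough to absorb the deletion of one or two vertices. I would first split on $s:=|S|\in\{0,1,2\}$. The case $s=0$ is Tutte's theorem \cite{Tutte56}. For $s\ge 1$ set $H:=G-S$; since $G$ is $4$-connected and $s\le 2$, the graph $H$ is $2$-connected and planar, and deleting $S$ from a fixed plane embedding of $G$ opens up one distinguished face (if $s=1$, or if $s=2$ and the two vertices shared a face) or two distinguished faces (if $s=2$ and they did not), whose boundaries are cycles recording where $S$ used to sit and on which all neighbours of $S$ now lie. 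The right framework is then that of \emph{circuit graphs} (and, in the two-face situation, a mild ``annular'' generalisation): in both cases one verifies that there is no $2$-cut $\{p,q\}$ of $H$ with a component $K$ of $H-\{p,q\}$ disjoint from the distinguished face(s), because such a $K$ would give a separator $\{p,q\}\cup S$ of size at most $4$ of $G$, which the $4$-connectivity of $G$ forbids.

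The core of the argument is then a Tutte-type theorem, proved by induction on $|V(H)|$: for a circuit graph $(H,C)$ (resp. its annular analogue), a prescribed edge $e\in E(C)$ and prescribed vertices $x,y\in V(C)$, there is an $x$--$y$ path $P$ with $e\in E(P)$ all of whose bridges have at most three attachment vertices, a bridge meeting $C$ (resp. a distinguished face) having at most two. For the present application the induction hypothesis must be sharpened so as also to control which vertices of the distinguished face(s) may avoid $P$ and how the three-attachment bridges are positioned --- enough control that, when $H=G-S$, no surviving bridge can be disjoint from the neighbourhood of $S$. The induction is the familiar one: cut $H$ along $e$, along $2$-cuts separating arcs of $C$, and along the neighbourhood of an endpoint, solve the pieces, and reassemble the Tutte paths returned for them. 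Checking that all the bridge bounds and the extra sharpenings survive each gluing case --- and, in the two-face case, that the second distinguished face can always be kept on a single piece --- is where essentially all of the work lies, and is the step I expect to be the main obstacle; it is the technically heavy content of \cite{ThomasYu}.

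Finally I would deduce the Hamilton cycle. Apply the sharpened theorem to $H$ with $e\in E(C)$ and $x,y$ the ends of an adjacent edge $f\in E(C)$; then $P+f$ is a cycle $\Gamma$ of $H$ through $e$ and $f$ whose bridges have at most three attachments, at most two when the bridge meets a distinguished face. Suppose some bridge $B$ of $\Gamma$ had a vertex $w\notin V(\Gamma)$, with attachment set $A$, $|A|\le 3$. Were $B$ disjoint from the distinguished face(s), then $B$ contains no neighbour of $S$, so $A$ alone separates $w$ from the rest of $G$, contradicting $4$-connectivity; the remaining case, that $B$ meets a distinguished face and so may interact with the neighbours of $S$, is ruled out by the sharper bound $|A|\le 2$ together with the extra control built into the induction, which forces the relevant face vertices onto $\Gamma$ except in configurations incompatible with the $4$-connectivity of $G$. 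Hence $\Gamma$ has no non-trivial bridge, so $V(\Gamma)=V(H)=V(G)\setminus S$ and $\Gamma$ is a Hamilton cycle of $G-S$, as required.
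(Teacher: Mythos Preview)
The paper does not prove this theorem at all: it is quoted verbatim as a result of Thomas and Yu \cite{ThomasYu} (originally conjectured by Plummer) and used as a black box. Your proposal is not so much an alternative proof as a sketch of the Thomas--Yu argument itself --- and you acknowledge as much when you say the gluing verification ``is the technically heavy content of \cite{ThomasYu}''. So there is nothing to compare on the level of approach: the paper's ``proof'' is the citation, and your sketch points at the same source.

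That said, as a roadmap your outline is accurate in spirit but not yet a proof. The reduction to circuit graphs and the bridge-counting endgame are correct, and the observation that a non-trivial bridge of $\Gamma$ disjoint from the distinguished face(s) would yield a small separator of $G$ is the right mechanism. The genuine gap is exactly where you flag it: the ``sharpened induction hypothesis'' that controls which vertices of the distinguished face(s) may miss the Tutte cycle, and how the second distinguished face is threaded through the decomposition in the $s=2$ case, is the entire substance of the Thomas--Yu paper and cannot be waved through. In particular, your final paragraph's appeal to ``the extra control built into the induction'' to dispose of bridges meeting a distinguished face is not an argument until that control is actually stated and verified; in Thomas--Yu this is handled by a rather delicate formulation involving internally $4$-connected annulus-type graphs, and getting the exact statement right so that it both closes under the decomposition and forces the face vertices onto $\Gamma$ is non-trivial. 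If your intent is simply to invoke the result, then citing \cite{ThomasYu} as the paper does is the honest move; if you mean to reprove it, the present sketch is a plan, not a proof.
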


Together with Lemma~\ref{lem:remove_vertices} this theorem implies immediately:

\begin{theorem}

\begin{itemize}

\item Polyhedra with at most two $3$-cuts are hamiltonian.

\item Polyhedra with at most one $3$-cut are $1$-hamiltonian.

\end{itemize}

\end{theorem}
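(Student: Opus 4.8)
The plan is to combine Lemma~\ref{lem:remove_vertices} with Theorem~\ref{thm:RYU}, since both assertions follow essentially immediately. First I would record the trivial but crucial observation that if $H$ is a spanning subgraph of a graph $K$ (so that $V(H)=V(K)$) and $H$ is hamiltonian, then $K$ is hamiltonian as well: a hamiltonian cycle of $H$ is a cycle through all vertices of $K$ that uses only edges of $K$.

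For the first bullet, let $G$ be a polyhedron with at most two $3$-cuts. By Lemma~\ref{lem:remove_vertices}, $G$ has a spanning subgraph $H$ that arises from a $4$-connected polyhedron $P$ by deleting a set $S$ of at most two vertices, i.e. $H = P - S$ with $|S| \le 2$. By Theorem~\ref{thm:RYU}, $P$ is $2$-hamiltonian, so $H = P - S$ is hamiltonian; since $H$ is a spanning subgraph of $G$, the graph $G$ is hamiltonian.

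For the second bullet, let $G$ be a polyhedron with at most one $3$-cut. Lemma~\ref{lem:remove_vertices} now gives a spanning subgraph $H = P - S$ of $G$ with $P$ a $4$-connected polyhedron and $|S| \le 1$. To show that $G$ is $1$-hamiltonian I must show that $G - T$ is hamiltonian for every $T \subseteq V(G)$ with $|T| \le 1$. Since $V(H)=V(G)$, the graph $H - T = P - (S \cup T)$ is a spanning subgraph of $G - T$, and $|S \cup T| \le |S| + |T| \le 2$; hence by Theorem~\ref{thm:RYU} the graph $P - (S \cup T)$ is hamiltonian, and therefore so is $G - T$. It is exactly here that the full strength of $2$-hamiltonicity of $P$, rather than mere hamiltonicity, is used.

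I do not expect a genuine obstacle, as the statement is really a corollary of the two quoted results. The only point requiring a moment's care is the bookkeeping in the second bullet: the single vertex deleted to pass from the $4$-connected polyhedron $P$ to the spanning subgraph $H$, together with the single vertex whose removal is demanded by the definition of $1$-hamiltonicity, still amount to at most two deleted vertices, which is precisely the budget that Theorem~\ref{thm:RYU} allows.
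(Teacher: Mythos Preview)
Your argument is correct and is exactly the approach the paper takes: it records the statement as an immediate consequence of Lemma~\ref{lem:remove_vertices} together with Theorem~\ref{thm:RYU}. You have merely spelled out the ``spanning subgraph'' and vertex-counting details that the paper leaves implicit.
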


Theorem~\ref{thm:RYU} can obviously not be strengthened to imply
$3$-hamiltonicity, so in order to prove that polyhedra with at most three $3$-cuts are hamiltonian,
we need another strategy.

\medskip

We use the following technical Lemma~\ref{lem:JY:02-6} from
\cite{JY:02}. A {\em circuit graph} is a pair $(G,F)$
such that $G$ is a 2-connected plane graph and $F$ is a facial cycle
of $G$ such that for any 2-cut $U$ of $G$, each component of $G - U$
contains a vertex of $F$. In a $3$-connected graph, each facial cycle has this property.
Let $X$ be a cycle in a circuit graph
$(G,F)$. An $X$-bridge of $G$ is either a single edge of $G-E(X)$ with
both ends on $X$, or a component $B$ of $G-V(X)$ together with the edges
with one endpoint on $X$ and one in $B$ and the endpoints of these edges
on $X$. We say that a cycle $X$ is an
$F$-Tutte cycle if for any $X$-bridge $B$ we have that $|V(B) \cap
V(X)|\leq3$ and for any $X$-bridge $B$ containing an edge of $F$ we
have that $|V(B) \cap V(X)|\leq2$. Using this terminology, an
$F$-Tutte cycle is a hamiltonian cycle if and only if all bridges are
single edges. We call such bridges trivial.

We will use the following Remark:

\begin{rem}\label{rem:unionpaths}

If $(G,F)$ is a circuit graph and $X$ is an $F$-Tutte cycle, then we have:

\begin{itemize}

\item If $v\in F$ and $v\not\in X$ then there are at most
$2$ vertex disjoint paths from $v$ to $X$. So there
is a set $C$ of at most two vertices, so that $v$ and the elements
of $X-C$ are in different components of $G-C$.

\item If $v\not\in F$ and $v\not\in X$ then there are at most
$3$ vertex disjoint paths from $v$ to $X$. So there
is a set $C$ of at most three vertices, so that $v$ and the elements
of $X-C$ are in different components of $G-C$.

\end{itemize}

\end{rem}

\begin{proof}

This is a direct consequence of the definition of an $X$-bridge.


\end{proof}

\section{The main results about hamiltonian cycles and paths in polyhedra with few 3-cuts}\label{sec:main}

In this section we will first prove some technical results. They will lead to the main theorem that polyhedra with
at most three $3$-cuts are hamiltonian, but can also be useful in other contexts.

\begin{lemma}[\cite{JY:02}, (3.2)]\label{lem:JY:02-6}
Let $(G,F)$ be a circuit graph, $r, z$ be vertices of $G$ and $e\in
E(F)$. Then $G$ contains an $F$-Tutte cycle $X$ through $e$, $r$ and
$z$.
\end{lemma}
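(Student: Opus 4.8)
The plan is to prove the statement by induction on $|V(G)|$, in the style of Tutte's original argument for Tutte cycles and its refinements by Thomassen and by Gao--Richter for circuit graphs. It is convenient first to pass to a path version: writing $e=v_1v_2$, an $F$-Tutte cycle of $G$ through $e$ is essentially the same object as a Tutte path of $G-e$ from $v_1$ to $v_2$, taken with respect to the facial cycle obtained from $F$ by deleting $e$ (or by replacing $e$ with a subdivided edge), so it suffices to produce a Tutte path from $v_1$ to $v_2$ with respect to the relevant outer cycle $C$ that in addition passes through $r$ and through $z$. The case in which only one of $r,z$ really constrains the path --- for instance $r=z$, or one of them an end of $e$, or one of them lying on $C$ in a position already forced --- reduces to the classical \emph{one}-prescribed-vertex circuit-graph theorem, which may therefore serve as the anchor of the induction; the base case is $|V(G)|\le 3$, where $G$ is an edge or a triangle and $X:=F$ works trivially.

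If $G$ has a $2$-cut $\{a,b\}$ then, because $(G,F)$ is a circuit graph, $G-\{a,b\}$ has exactly two components, each meeting $F$, so $F$ breaks into two arcs and $G$ is the union of two strictly smaller circuit graphs $(G_1,F_1)$ and $(G_2,F_2)$ glued along $\{a,b\}$, the edge $ab$ being added to each side if absent (it then lies on $F_i$). The edge $e$ lies on one side, say $G_1$, and one distributes $r$ and $z$ to the two sides. The intended step is to obtain by induction Tutte paths in $G_1$ and in $G_2$ through the prescribed data landing in each side together with the cut vertices, and to splice them along $\{a,b\}$; the bridges of the spliced path are then bridges of the two sides, controlled by induction (using that an edge of $F$ inside $G_i$ is an edge of $F_i$), or the trivial chord $ab$. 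The real difficulty surfaces here: if both $r$ and $z$ fall on the same side, that side would have to carry $e$, the two cut vertices, and both $r$ and $z$ simultaneously, which is more prescribed data than the induction hypothesis supplies. The case distinction must therefore be organized carefully --- by choosing which side inherits $a$ and $b$, by collapsing the side that carries no prescribed vertex to the single edge $ab$ and checking that it then becomes an admissible (in particular, $F$-touching) bridge, or by disposing of such configurations by a direct argument --- so that no recursive call ever asks for a Tutte path through more than one edge of the face and two further vertices.

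If $G$ is $3$-connected one runs the Tutte--Thomassen local reduction: choose a vertex $w$ on $C$ adjacent to an end of $e$ (or, more generally, a suitable vertex of $C$ sitting between the prescribed data), delete $w$, delete the short faces that this creates, and insert new edges in the exposed region so as to obtain a smaller circuit graph; apply the induction hypothesis there --- relocating any of $r,z$ that got deleted to an appropriate substitute vertex on the new boundary --- to get a Tutte path, then reinsert $w$ along with enough of the deleted vertices to extend the path, verifying that each newly created bridge attaches to at most three vertices of the path and to at most two if it meets $C$. Carrying this through while always keeping $r$ and $z$ on the path, never exceeding the permitted amount of prescribed data in any recursive instance, and re-checking the bridge conditions after every reassembly is the heart of the argument; it is this book-keeping, rather than any single trick, that makes the two-vertex statement considerably more delicate than the one-vertex circuit-graph theorem, and it is the step I expect to absorb most of the effort.
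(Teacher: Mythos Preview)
The paper does not prove this lemma at all: it is quoted verbatim from Jackson and Yu \cite{JY:02} (their (3.2)) and used as a black box in the proof of Theorem~\ref{thm:2edgesplus}. So there is no ``paper's own proof'' to compare your proposal against; any self-contained argument you give is automatically a different route.

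As for the proposal itself, what you have written is an honest outline of the Tutte--Thomassen strategy, but it is not a proof: you explicitly flag the two hard places and leave them unresolved. In the $2$-cut case you note that if both $r$ and $z$ land on the side containing $e$, the recursive call would need a Tutte path through one boundary edge, two cut vertices, \emph{and} two further prescribed vertices --- five pieces of data --- which is more than the induction hypothesis gives; you list several possible escapes (collapse the other side, reorganize which side inherits $a,b$) but do not verify that any of them actually closes the case. In the $3$-connected case the ``delete a boundary vertex, patch, recurse, reinsert'' description is too coarse: the actual Jackson--Yu argument (and the earlier Thomassen/Sanders arguments it builds on) requires a carefully stated stronger induction hypothesis about Tutte \emph{paths} between two prescribed boundary vertices through one further prescribed vertex, and the step from one to two prescribed interior vertices is exactly the new content of \cite{JY:02}, carried out over several pages of case analysis. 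Your last paragraph essentially concedes this (``it is this book-keeping \ldots\ that makes the two-vertex statement considerably more delicate''), so as it stands the proposal identifies the right skeleton but does not supply the flesh.
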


A key to the main result in \cite{JY:02} is their Theorem~4.1.
The following theorem is a corresponding result in the
more general context of polyhedra. For $t=d=0$ it is also a direct consequence
of Corollary~2 in \cite{Sanders97}.

\begin{theorem} \label{thm:2edgesplus}

Let $0\le t,d \le 2$ be integers with $t+d\le 2$ and
$G=(V,E)$ be 
an essentially $4$-connected  polyhedron.

Let $f$ be a face of $G$ containing a vertex $v$ with degree
at least $4$
with neighbouring edges $\{u,v\}$, $\{v,w\}$ in the boundary
and assume that except for $u,w$ -- which can have any degree
of at least $3$ --
there are exactly $t$ vertices $r_1,\dots ,r_t$ of degree $3$.
Furthermore let $t_1=(u_1,v_1,w_1), \dots ,t_d=(u_d,v_d,w_d)$ be
facial triangles different from $f$, so that in case $d=2$
not all three faces $f,t_1,t_2$ share a vertex of degree
$3$ (which would have to be $u$ or $w$).

Then the following two equivalent statements are true:

\begin{itemize}

\item There is a  hamiltonian cycle of $G$ containing $\{u,v\}$ and $\{v,w\}$
and $d$ edges $e_1, \dots ,e_d$ with $e_j\in t_j$ for $1\le j\le d$ so that all edges
$\{u,v\}, \{v,w\},e_1, \dots ,e_d$ are different.

\item There is a path from $u$ to $w$ containing all vertices of $G$ but
$v$, that contains  $d$ distinct edges
$e_1, \dots ,e_d$
with $e_j\in t_j$ for $1\le j\le d$.
Note that if $f$ is a triangle, the path contains no edge of $f$.

\end{itemize}

\end{theorem}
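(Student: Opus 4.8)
The plan is to proceed by induction on the number of vertices of $G$, using the technical Tutte-cycle machinery of Lemma~\ref{lem:JY:02-6} as the base-level tool. First I would dispose of the equivalence of the two statements: a hamiltonian cycle of $G$ through $\{u,v\}$ and $\{v,w\}$ is exactly a hamiltonian path of $G-v$ from $u$ to $w$ together with the two edges at $v$, and under this correspondence an edge $e_j\in t_j$ distinct from $\{u,v\},\{v,w\}$ lies on the cycle iff it lies on the path (noting that if $f$ is a triangle then $u,v,w$ are its vertices and the path avoids the edge $uw$ of $f$, as claimed). So it suffices to produce the hamiltonian cycle.

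Next I would set up the induction. If $G$ itself is $4$-connected (so $t=0$) and all the $t_j$ impose no real obstruction, this is essentially Corollary~2 of \cite{Sanders97} / the $t=d=0$ case, or follows directly from Lemma~\ref{lem:JY:02-6} applied with $F=f$, $e$ one of the two edges at $v$, and $r,z$ chosen to force the triangle edges; the point is that in a $4$-connected planar graph every $F$-Tutte cycle is hamiltonian since all bridges are trivial. The substantive work is when $G$ has $3$-cuts, all of which are trivial (essential $4$-connectedness), i.e. when $t\ge 1$ or one of the faces $t_j$ is a "separating" triangle at a degree-$3$ vertex. Then I would pick such a trivial $3$-cut, say $N(r)=\{x,y,z\}$ for a degree-$3$ vertex $r$, contract or delete appropriately to get a smaller essentially $4$-connected polyhedron $G^\ast$ (using Lemma~\ref{lem:edgeclosed} and Corollary~\ref{cor:faceroundedge} to control what happens to faces and cuts), apply the induction hypothesis to $G^\ast$ with a suitably transported face $f$, vertex $v$, edge pair, and triangle data, and then re-insert $r$ into the resulting hamiltonian cycle by routing the cycle through $r$ via two of its three neighbours. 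The bookkeeping is in making sure the re-insertion is always possible: the hamiltonian cycle of $G^\ast$ must use at least one of the (at most three) "boundary" edges near $r$ so that $r$ can be spliced in, which is exactly why the $t$ degree-$3$ vertices and the $d$ triangles are carried through the induction as edges we are allowed (and sometimes required) to hit.

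The main obstacle I anticipate is the combinatorial case analysis governing the degenerate configurations: when $d=2$ and the two triangles $t_1,t_2$ together with $f$ interact (the hypothesis explicitly forbids all three sharing a degree-$3$ vertex, and one must check this is exactly the obstruction that arises); when a triangle $t_j$ coincides with the neighbourhood of the degree-$3$ vertex $r$ chosen for the induction, so that the demanded edge $e_j$ and the re-insertion of $r$ compete for the same local structure; and when $u$ or $w$ themselves have degree $3$, so that the face $f$ already sits against a separating triangle. In each such case one has to either choose the $3$-cut for the induction more cleverly (Lemma~\ref{lem:existstrivecc} guarantees a trivial edge-closed component with no further $3$-cuts, which is the natural place to start peeling) or handle the small graph directly. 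I would organize the proof so that the generic inductive step is short and the bulk is a finite check of these boundary configurations, keeping track throughout of the invariant "$t+d\le 2$" so that the hypotheses of the induction hypothesis are met after each reduction.
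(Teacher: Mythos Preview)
Your inductive scheme goes in the opposite direction from the paper and, as written, has a genuine gap in the base case. You propose to peel off a degree-$3$ vertex $r$, thereby trading one of the $t$ degree-$3$ vertices for an extra triangle demand (so $t\mapsto t-1$, $d\mapsto d+1$), and recurse on a smaller graph. The recursion then bottoms out at $t=0$, i.e.\ at a $4$-connected $G$ with possibly $d=1$ or $d=2$ prescribed triangles. At that point you appeal to Lemma~\ref{lem:JY:02-6} ``with $r,z$ chosen to force the triangle edges''. But Lemma~\ref{lem:JY:02-6} only lets you prescribe one edge $e\in E(F)$ and two \emph{vertices} $r,z$; passing through a vertex of a triangle does not force the Tutte cycle to use an edge of that triangle, and in a $4$-connected graph a hamiltonian cycle can visit all three vertices of a facial triangle while using none of its edges. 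So your base case $t=0,\ d>0$ is not covered, and the induction does not close.

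The paper avoids this by running the reduction the other way: it converts each triangle demand $t_j$ into a degree-$3$ vertex by \emph{inserting} a new vertex of degree $3$ inside $t_j$ (after first checking that $t_j$ may be assumed to contain no degree-$3$ vertex already). This increases $t$ and drives $d$ to $0$; no induction on $|V|$ is used at all. With $d=0$ one simply deletes $v$, observes that $(G-v,F')$ is a circuit graph (the $2$-cuts of $G-v$ are exactly the neighbourhoods of degree-$3$ vertices adjacent to $v$, and each such cut leaves a trivial component plus a component meeting $F'$), and applies Lemma~\ref{lem:JY:02-6} once with $e=\{u,w\}$ and $r=r_1$, $z=r_2$. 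The resulting $F'$-Tutte cycle $X$ is then shown to be hamiltonian directly: a missed vertex on $F'$ would give a $2$-cut whose trivial side cannot contain any of $u,w,r_1,r_2$, and a missed vertex off $F'$ has degree $\ge 4$, hence four disjoint paths to $v$ through $F'\subseteq X$, contradicting the Tutte-bridge bound. Finally the inserted degree-$3$ vertices are excised, each replacement producing the required edge $e_j\in t_j$. This single-shot argument sidesteps all of the degenerate-configuration bookkeeping you anticipated.
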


\begin{proof}


As $G$ is $3$-connected, the edge $\{u,w\}$ is either not present in $G$
or the face $f$ is a triangle containing the edge $\{u,w\}$. Otherwise
$\{u,w\}$ would be a 2-cut. So we can assume that $f$
is a triangle $(u,v,w)$. If this was not the case, we could add the edge $\{u,w\}$
and remove it afterward as it will not be part of the hamiltonian cycle constructed.

Furthermore we can assume that $t_1, \dots ,t_d$ do not contain vertices of degree $3$:
If a triangle contains a vertex of degree $3$, one of its edges will be contained in a
hamiltonian cycle, so that we do not have to ensure this property.
Note that this edge is only counted for one triangle, as for
$d=2$, there are no vertices of degree $3$ (except maybe $u$ or $w$
which must not be shared). In case one
triangle contains $u$ and $u$ has degree $3$ (or analogously $w$),
we have also to make sure that it
contains another edge than $\{v,u\}$, but this is guaranteed as the edge $\{u,w\}$ is not contained
in the hamiltonian cycle and the third edge is contained in both faces different from $f$.

It is sufficient to prove the theorem for $d=0$, as for $d>0$
we can add a new vertex of degree $3$ in the interior
of triangle $t_i$,  $1\le i \le d$. As no vertex in the triangle has degree
$3$, the result is an essentially $4$-connected  polyhedron with $d=0$
and $t$ increased by $d$. At the end we can modify the hamiltonian
cycle through the new vertices by replacing the two edges incident with
the new vertices by the edge connecting the endpoints of the edges
on the triangle. The resulting hamiltonian cycle has the properties given in the
theorem.

If $t=0$ we choose arbitrary vertices $r_1,r_2\not\in \{u,v,w\}$ and if $t=1$ we choose
an arbitrary vertex $r_2\not\in \{u,v,w,r_1\}$.

Let $G'$ denote the graph $G-\{v\}$.
If $v$ was not contained in a $3$-cut, $G'$ is $3$-connected
and with $F'$ the new face, $(G',F')$ is an $F'$-circuit graph.

If $v$ was contained in a $3$-cut, the 2-cuts of $G-\{v\}$ are exactly
the neighbourhoods of former vertices of degree $3$ that were adjacent
to $v$.
Together with $v$ each 2-cut is a $3$-cut in $G$, so we have exactly
2 components, one component
is trivial, and the cutvertices are on the boundary of $F'$.
The face $F'$ contains at least $4$ vertices and after the removal of a 2-cut
each component contains vertices of $F'$.
So $(G',F')$ is an $F'$-circuit graph.

Due to Lemma~\ref{lem:JY:02-6} $G'$ has an $F'$-Tutte cycle
$X$ through $\{u,w\}$, $r_1$ and $r_2$. We will show that $X$
is a hamiltonian cycle.



Assume first that there is a vertex $y\in F'$, $y\not\in X$.
As $X$ is an $F'$-Tutte cycle, this would imply that
there is a 2-cut $C$ separating $y$ from
the vertices in $\{u,w,r_1,r_2\}-C$. As there are at least two vertices in
$\{u,w,r_1,r_2\}-C$ and as one component of each 2-cut is trivial
-- which can not be $y$ -- at least one vertex of $\{u,w,r_1,r_2\}-C$ is in
the same component as $y$. This is a contradiction, so all vertices of $F'$
are contained in $X$.

Assume now that $y$ is not on $F'$ and not on $X$.
This implies that $\deg(y)\ge 4$ and that there are $4$ vertex disjoint paths
in $G$ from $y$ to $v$. All these paths must contain vertices of $F'\subseteq X$, so
the union of the first parts of these paths from $y$ to the first element from $X$
shows that the $X$-bridge containing $y$ has at least $4$ endpoints on $X$
-- a contradiction.

So $X$ is a hamiltonian cycle of $G'$ and replacing the edge $\{u,w\}$ by the
edges $\{u,v\}$ and $\{v,w\}$, we have a hamiltonian cycle
of $G$ with the required properties.

\end{proof}

Theorem~\ref{thm:2edgesplus} can be used to prove the following corollary:

\begin{corollary}\label{cor:oneedgetriangle_deg3}

Let $G=(V,E)$ be an essentially $4$-connected  polyhedron with at most one
vertex of degree $3$ and containing a facial triangle $(u,v,w)$
with degree of $v$ at least $4$.
Then the following two equivalent statements are true:

\begin{itemize}

\item There is a  hamiltonian cycle of $G$ containing $\{u,w\}$ as only edge of the
triangle $(u,v,w)$. If there is no vertex with degree $3$ and
another triangle $(\bar u,\bar v,\bar w)$ is given, the cycle can be chosen in a way
that also a (different) edge of $(\bar u,\bar v,\bar w)$ is contained.

\item There is a hamiltonian path from $u$ to $w$ containing no edge of the triangle
$(u,v,w)$. If there is no vertex with degree $3$ and
another triangle $(\bar u,\bar v,\bar w)$ is given, the path can be chosen in a way
that also an edge of $(\bar u,\bar v,\bar w)$ is contained.

\end{itemize}

\end{corollary}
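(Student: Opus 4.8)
The plan is to reduce the corollary to the first (cycle) statement of Theorem~\ref{thm:2edgesplus}, applied with the high-degree vertex $v$ as its central vertex, but using a face $h$ incident with $v$ that lies ``on the arc of $v$ opposite to'' the triangle $(u,v,w)$. I would begin by observing that the two bullets are equivalent: since $(u,v,w)$ is a facial triangle we have $\{u,w\}\in E(G)$, deleting this edge from a hamiltonian cycle that uses it as its only edge of $(u,v,w)$ gives a hamiltonian $(u,w)$-path with no edge of $(u,v,w)$, and re-adding it reverses this; the clause about $(\bar u,\bar v,\bar w)$ carries over unchanged. So it is enough to produce the cycle.

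Because $\deg v\ge 4$, in the rotation of $v$ the two neighbours $u,w$ that bound $f:=(u,v,w)$ are consecutive, while on the complementary arc $v$ has at least two further neighbours $a_1,\dots,a_{k-1}$; hence there is a face $h$ incident with $v$ whose two edges at $v$ go to consecutive neighbours $a_i,a_{i+1}\notin\{u,w\}$. Whenever Theorem~\ref{thm:2edgesplus} is applicable with central vertex $v$, face $h$, boundary edges $\{v,a_i\},\{v,a_{i+1}\}$ and with $d\in\{0,1\}$ copies of the (possible) extra triangle, it produces a hamiltonian cycle that at $v$ uses exactly $\{v,a_i\}$ and $\{v,a_{i+1}\}$, and so uses neither $\{u,v\}$ nor $\{v,w\}$. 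What is then left is to force $\{u,w\}$ into this cycle while keeping the hypotheses of Theorem~\ref{thm:2edgesplus} satisfied, and here I would distinguish two cases.

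Suppose first $\deg u=3$ (the case $\deg w=3$ is symmetric), and write $N(u)=\{v,w,x\}$. Then $u$ is the unique vertex of degree $3$ of $G$, so no extra triangle is prescribed, and we may apply Theorem~\ref{thm:2edgesplus} directly to $G$ with $t=1$, $d=0$. The resulting hamiltonian cycle $C$ avoids $\{u,v\}$, hence at the degree-$3$ vertex $u$ it is forced to use its two other incident edges $\{u,w\}$ and $\{u,x\}$; thus $C$ contains $\{u,w\}$ as its only edge of $(u,v,w)$, as required. (We cannot have $\deg u=\deg w=3$ simultaneously, as that would be two vertices of degree $3$.)

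Suppose now $\deg u\ge 4$ and $\deg w\ge 4$. I would add a new vertex $z$ inside $f$, joined to $u,v,w$, to obtain a graph $H$. Then $\{u,v,w\}$ is a trivial $3$-cut of $H$ (being a face of $G$, it is not a $3$-cut of $G$, so $G-\{u,v,w\}$ is connected and $z$ is isolated in $H-\{u,v,w\}$), and one checks that no other new $3$-cut is created and that all $3$-cuts of $G$ remain trivial in $H$ --- the only danger, a trivial $3$-cut $N(y)$ of $G$ with $y$ a degree-$3$ vertex in $\{u,w\}$ degenerating into the non-trivial cut that separates $\{y,z\}$, is excluded precisely by $\deg u,\deg w\ge 4$. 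So $H$ is an essentially $4$-connected polyhedron with $\deg_H v=\deg_G v+1\ge 5$ and with $z$ as its only new degree-$3$ vertex; consequently $t\le 2$, $d\le 1$ and $t+d\le 2$ (if an extra triangle is prescribed then $G$ has no degree-$3$ vertex, so in $H$ only $z$ has degree $3$ and $t=1$). Choosing $h$ as above in $H$ --- the extra triangle, if present, is still facial in $H$, being $\ne f$, and the coincidence $h=(\bar u,\bar v,\bar w)$ is harmless since then the two forced edges already lie on $(\bar u,\bar v,\bar w)$ --- Theorem~\ref{thm:2edgesplus} gives a hamiltonian cycle $C_H$ of $H$ using $\{v,a_i\}$ and $\{v,a_{i+1}\}$ at $v$. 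Then $C_H$ uses none of $\{v,u\},\{v,w\},\{v,z\}$; being hamiltonian it visits $z$, and since $\{v,z\}\notin C_H$ it must use the two remaining edges $\{z,u\},\{z,w\}$ at $z$. Deleting $z$ from $C_H$ thus yields a hamiltonian $(u,w)$-path of $G$ that uses only edges of $G$, avoids $\{u,v\}$ and $\{v,w\}$, also avoids $\{u,w\}$ (else $C_H$ would be the triangle $u,z,w$, impossible since $|V(H)|\ge 5$), and retains the prescribed edge of $(\bar u,\bar v,\bar w)$ when $d=1$; re-adding $\{u,w\}$ closes it into the required hamiltonian cycle. The step I expect to be the main obstacle is exactly this degree-dependent case analysis: pinning down that ``$\deg u,\deg w\ge 4$'' is precisely the condition under which the $z$-augmentation stays essentially $4$-connected, and, dually, recognising that a low-degree endpoint is not an obstruction but in fact forces $\{u,w\}$ into the cycle for free --- the genuine Tutte-cycle work being already packaged in Theorem~\ref{thm:2edgesplus}.
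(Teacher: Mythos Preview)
Your proof is correct, but it takes a noticeably longer road than the paper's.

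The paper simply picks a face $f'$ at $v$ with boundary edges $\{u',v\},\{v,w'\}$ where $u',w'\notin\{u,w\}$ (exactly your face $h$), and then applies Theorem~\ref{thm:2edgesplus} with $t_1=(u,v,w)$ and, if the extra triangle is given, $t_2=(\bar u,\bar v,\bar w)$. The theorem directly yields a hamiltonian cycle through $\{u',v\},\{v,w'\}$ that also contains an edge $e_1$ of $(u,v,w)$; since the two edges at $v$ are already accounted for, $e_1$ must be $\{u,w\}$, and one is done in a single stroke. No case distinction on $\deg u,\deg w$ and no auxiliary vertex are needed.

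What you do instead is force $\{u,w\}$ by hand: via the degree-$3$ argument at $u$ in Case~1, and via the auxiliary vertex $z$ in Case~2. The irony is that your $z$-augmentation is precisely the reduction the \emph{proof} of Theorem~\ref{thm:2edgesplus} performs internally to eliminate the parameter $d$ (``add a new vertex of degree $3$ in the interior of triangle $t_i$''). So you are, in effect, reproving that reduction rather than invoking it through the $d$-parameter of the theorem statement. Your route buys nothing extra and costs you the essential-$4$-connectedness check for $H$ and the degree split; the paper's route exploits the full strength of Theorem~\ref{thm:2edgesplus} and is correspondingly shorter. Both arguments do yield the ``different'' edge of $(\bar u,\bar v,\bar w)$, though in your version you should note explicitly that $e_1\neq\{u,w\}$ because $\{u,w\}\notin C_H$ (otherwise $C_H$ would be the $3$-cycle $u,z,w$).
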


\begin{proof}

As $v$ has degree at least $4$ there are
$\{u',v\},\{v,w'\}$ that are both not contained in the triangle $(u,v,w)$, but another face $f$.
Applying Theorem~\ref{thm:2edgesplus} for $d=1$ to the edges $\{u',v\},\{v,w'\}$
and the triangle $t_1=(u,v,w)$, (resp. $d=2$ if $(\bar u,\bar v,\bar w)$ is given)
we get that there is a hamiltonian cycle through
$\{u',v\},\{v,w'\}$ that contains an edge of $(u,v,w)$ -- but this edge can only be
$\{u,w\}$. In case $(\bar u,\bar v,\bar w)$ is given, the cycle also contains a
(different) edge of $(\bar u,\bar v,\bar w)$.
\end{proof}

\begin{lemma}\label{lem:onecut}

Let $G$ be a polyhedron with at most one $3$-cut and let $(u,v,w)$
be a triangular face with $\deg(v)\ge 4$.

\begin{description}
\item [(i)] Then there is a hamiltonian cycle of $G$ containing $\{u,v\}$ and $\{v,w\}$.
This implies that there is a path from $u$ to $w$ containing no edges of the triangle
$(u,v,w)$, but all vertices except $v$.
If another triangle $(\bar u,\bar v,\bar w)$ is given, the cycle (resp. path) can be chosen in a way
that also a (different) edge of $(\bar u,\bar v,\bar w)$ is contained.

\item[(ii)] There is also a hamiltonian
cycle of $G$ containing $\{u,w\}$ as only edge of $(u,v,w)$.
So there is a hamiltonian path from $u$ to $w$ containing no edges of $(u,v,w)$.

\end{description}

\end{lemma}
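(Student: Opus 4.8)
The plan is to reduce to Theorem~\ref{thm:2edgesplus} and Corollary~\ref{cor:oneedgetriangle_deg3} by splitting $G$ along its (at most one) $3$-cut and gluing the partial solutions along the cutting triangle. If $G$ has no $3$-cut it is $4$-connected, hence essentially $4$-connected with no vertex of degree $3$, and (i) follows by applying Theorem~\ref{thm:2edgesplus} to the face $(u,v,w)$, the vertex $v$ and the edges $\{u,v\},\{v,w\}$ with $t=0$ and $d=0$ (or $d=1$, $t_1=(\bar u,\bar v,\bar w)$), while (ii) follows from Corollary~\ref{cor:oneedgetriangle_deg3} applied to the facial triangle $(u,v,w)$ (and, if prescribed, the triangle $(\bar u,\bar v,\bar w)$). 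In both parts the path formulation is obtained by deleting $v$ from the constructed cycle, exactly as in those statements.

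Now suppose $G$ has exactly one $3$-cut $\{a,b,c\}$; since a facial triangle carrying a vertex of degree at least $4$ is never a $3$-cut, $\{a,b,c\}\neq\{u,v,w\}$. Let $G_1,G_2$ be the two edge closed components of $G-\{a,b,c\}$. By Lemma~\ref{lem:edgeclosed} each $3$-cut of $G_i$ is a $3$-cut of $G$, but $\{a,b,c\}$ is a facial triangle of $G_i$ and hence not a cut of it, so each of $G_1,G_2$ is $4$-connected or isomorphic to $K_4$. Adjacent vertices of $G$ lie in a common closed component, so the facial triangle $(u,v,w)$ (and, if prescribed, $(\bar u,\bar v,\bar w)$) lies in one of them, say $G_1$; if $v\notin\{a,b,c\}$ then $\deg_{G_1}(v)=\deg_G(v)\ge4$ and $G_1$ is $4$-connected. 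If $\{a,b,c\}$ is a trivial cut, say $G_2\cong K_4$ with degree-$3$ vertex $z$, then we apply Theorem~\ref{thm:2edgesplus} (resp. Corollary~\ref{cor:oneedgetriangle_deg3}) to $G_1$, counting the triangle $(a,b,c)$ among the prescribed triangles, to obtain a Hamiltonian cycle of $G_1$ with the required edges that additionally contains an edge $\{p,q\}$ of $(a,b,c)$; replacing $\{p,q\}$ by the path $p,z,q$ gives the desired Hamiltonian cycle of $G$.

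So assume both $G_1$ and $G_2$ are $4$-connected. Apply Theorem~\ref{thm:2edgesplus} to $G_1$ with face $(u,v,w)$, vertex $v$ and $d=1$, $t_1=(a,b,c)$ (resp. $d=2$, adding $(\bar u,\bar v,\bar w)$), to get a Hamiltonian cycle $C_1$ of $G_1$ through $\{u,v\},\{v,w\}$ (and an edge of $(\bar u,\bar v,\bar w)$) which uses a non-empty set $S_1\subseteq\{\{a,b\},\{b,c\},\{c,a\}\}$ of triangle edges; note $|S_1|\in\{1,2\}$. If $S_1=\{\{x,y\}\}$ and $z_0$ is the third vertex of the triangle, apply Theorem~\ref{thm:2edgesplus} to $G_2$ with special vertex $z_0$ in the face $(a,b,c)$ and its two incident edges $\{x,z_0\},\{z_0,y\}$ (with $t=d=0$) to get a Hamiltonian cycle $C_2$ of $G_2$ using exactly the triangle edges $\{x,z_0\},\{z_0,y\}$. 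If $S_1=\{\{x,z_0\},\{z_0,y\}\}$, apply Theorem~\ref{thm:2edgesplus} to $G_2$ with special vertex $z_0$, a pair of its incident face edges leading to neighbours other than $x$ and $y$, and $d=1$, $t_1=(a,b,c)$: the triangle edge thus forced into the cycle cannot meet $z_0$ and so must be $\{x,y\}$, so $C_2$ uses exactly the triangle edge $\{x,y\}$. In either case the triangle-edge sets of $C_1$ and $C_2$ partition $\{\{a,b\},\{b,c\},\{c,a\}\}$, and deleting from each $C_i$ exactly its triangle edges leaves a system of paths using only edges of $G$. These two path systems meet only in the two common cut vertices that are endpoints of both, while the remaining cut vertex is covered by whichever system does not isolate it; hence their union is a Hamiltonian cycle of $G$ containing $\{u,v\},\{v,w\}$ (and an edge of $(\bar u,\bar v,\bar w)$). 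Part (ii) is proved identically, replacing Theorem~\ref{thm:2edgesplus} by Corollary~\ref{cor:oneedgetriangle_deg3} on the side carrying $(u,v,w)$, and in both parts the statements about paths follow by deleting $v$ from the constructed cycle.

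I expect the main obstacle to be the compatibility bookkeeping along the cutting triangle: arranging that $C_1$ and $C_2$ use complementary triangle edges, so that after removing exactly those edges the two path systems fit together into a single spanning cycle made of genuine edges of $G$. A second source of friction will be the degenerate configurations that fall outside the clean case above --- a component isomorphic to $K_4$, or a vertex of the cut playing the role of $v$, $u$ or $w$ (which can push $\deg_{G_i}$ of a cut vertex down to $3$) --- and these have to be dealt with by hand, as above through the subdivision trick or, for the handful of small graphs that remain, by direct inspection.
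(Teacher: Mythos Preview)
Your overall strategy coincides with the paper's, and most of the mechanics are sound. There is, however, a genuine gap in the gluing step. Suppose two of $u,v,w$ lie in the cut, say $\{u,v\}\subseteq\{a,b,c\}$ (this is not excluded: only $\{u,v,w\}=\{a,b,c\}$ is impossible). Then $\{u,v\}$ is an edge of the cut triangle $(a,b,c)$, and since you force $C_1$ to contain $\{u,v\}$, you get $\{u,v\}\in S_1$. Theorem~\ref{thm:2edgesplus} also supplies a \emph{different} edge $e_1$ of $(a,b,c)$, so $|S_1|=2$ and your construction deletes $\{u,v\}$ from $C_1$. Your $C_2$ is built to use exactly the third triangle edge, so the assembled Hamiltonian cycle of $G$ no longer contains $\{u,v\}$, and (i) fails. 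The same happens to (ii) when $\{u,w\}\subseteq\{a,b,c\}$. You do flag ``a vertex of the cut playing the role of $v,u,w$'' as a problem, but you diagnose it as a degree issue; in the non-trivial case both $G_1,G_2$ are $4$-connected and there is no degree obstruction --- the actual obstruction is this edge coincidence. The paper sidesteps it by \emph{not} using a symmetric complementary gluing: it replaces only the triangle edge of $C_1$ that is explicitly guaranteed to be different from $\{u,v\},\{v,w\}$ (and $\bar e$) by a path or Hamiltonian path in $G_2$, so the prescribed edges survive intact.

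Two smaller points. First, your handling of the trivial cut is more fragile than necessary: if the degree-$3$ vertex $z$ equals $u$ or $w$, the face $(u,v,w)$ sits in the $K_4$ component where Theorem~\ref{thm:2edgesplus} is inapplicable, and you defer to ``direct inspection''. The paper instead observes that a polyhedron whose only $3$-cut is trivial is itself essentially $4$-connected with at most one vertex of degree~$3$, and applies Theorem~\ref{thm:2edgesplus} and Corollary~\ref{cor:oneedgetriangle_deg3} directly to $G$. Second, you tacitly place $(\bar u,\bar v,\bar w)$ in $G_1$; it may lie in $G_2$, in which case the extra triangle has to be prescribed on that side (the paper handles this explicitly).
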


\begin{proof}
We will give the proof of (i) only for the stronger case where also $(\bar u,\bar v,\bar w)$
is given.

Note that as $(u,v,w)$ and $(\bar u,\bar v,\bar w)$ are triangular faces,
none of them forms the unique $3$-cut
and they are also faces in one of the edge closed components.

If the $3$-cut is trivial, (i) and (ii) are direct consequences of Theorem~\ref{thm:2edgesplus}
and Corollary~\ref{cor:oneedgetriangle_deg3}.

So assume that the $3$-cut $\{u',v',w'\}$ is not trivial and let $G'$, $G''$ be the two edge-closed
components of $G-\{u',v',w'\}$ sharing $\{u',v',w'\}$.
Then $G'$, $G''$ are $4$-connected and all vertices have degree at least $4$.
Assume that -- w.l.o.g. -- $u,v,w$ are in $G'$.

{\bf (i) } $G'$
contains a hamiltonian cycle $H$ through $\{u,v\}$ and $\{v,w\}$ that contains also
an edge of the triangle $(u',v',w')$ different from $\{u,v\}$ and $\{v,w\}$
(Theorem~\ref{thm:2edgesplus}) and a different edge $\bar e$ in $(\bar u,\bar v,\bar w)$
if that triangle is in $G'$.
If it contains one edge -- say $\{u',v'\}$ -- different from $\{u,v\}$, $\{v,w\}$, and $\bar e$,
then we can apply Theorem~\ref{thm:2edgesplus} to $G''$ and get that $G''$
has a path from $u'$ to $v'$ containing all vertices
but $w'$. Replacing the edge $\{u',v'\}$ with this path gives the hamiltonian cycle of $G$
with the required properties. If $(\bar u,\bar v,\bar w)$ was in $G''$, the path
can be chosen in a way that it contains an edge of  $(\bar u,\bar v,\bar w)$ not contained in $(u',v',w')$.

If $H$ contains two edges of $(u',v',w')$
different from $\{u,v\}$ and $\{v,w\}$ -- say $\{u',v'\}$, $\{v',w'\}$ --
then we can apply Corollary~\ref{cor:oneedgetriangle_deg3} to conclude that $G''$
has a hamiltonian path from $u'$ to $w'$ using no edges of the triangle $(u',v',w')$
(but an edge of $(\bar u,\bar v,\bar w)$ if that was in $G''$).
Replacing the path $u',v',w'$ in $H$ by this hamiltonian path gives the hamiltonian cycle of $G$
with the required properties.

{\bf (ii) }
Let $\{v,x\},\{v,y\}$ be edges in the same face of $G'$ with $\{x,y\}\cap \{u,w\}=\emptyset$.
Then there is a hamiltonian cycle $H$ through $\{v,x\}$ and $\{v,y\}$ that contains also
an edge of the triangle $(u,v,w)$ (which must be $\{u,w\}$)
and in case $\{u',v',w'\}\not= \{v,u,y\}$ also a different
edge in $(u',v',w')$ (Theorem~\ref{thm:2edgesplus}). Precisely as in the first part, this
hamiltonian cycle can be extended to a hamiltonian cycle of $G$ by applying Theorem~\ref{thm:2edgesplus},
resp. Corollary~\ref{cor:oneedgetriangle_deg3}. The extension in $G''$ does not use any edges
that occur in $G'$ (also no edges of the triangle $(u',v',w')$ which occur in both -- $G'$ and
$G''$), so the resulting hamiltonian cycle contains $\{u,w\}$ as only edge from $(u,v,w)$.

\end{proof}

\begin{lemma}\label{lem:twocuts}

Let $G$ be a polyhedron with at most two $3$-cuts and let $(u,v,w)$
be a triangular face.

Then there is a hamiltonian cycle of $G$ containing
an edge of $(u,v,w)$.

\end{lemma}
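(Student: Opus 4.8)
The plan is to induct on the number of $3$-cuts of $G$, using Lemma~\ref{lem:onecut} as the base case (at most one $3$-cut) together with the earlier structural lemmas. So suppose $G$ has exactly two $3$-cuts and a triangular face $(u,v,w)$. First I would invoke Lemma~\ref{lem:existstrivecc} to find a $3$-cut $\{u',v',w'\}$ of $G$ one of whose edge closed components, call it $H$, has no $3$-cuts, i.e. $H$ is $4$-connected or is $K_4$; let $H'$ be the other edge closed component, which by Lemma~\ref{lem:edgeclosed} is a polyhedron with at most one $3$-cut. The idea is to build a hamiltonian cycle of $G$ by finding compatible hamiltonian-type structures in $H$ and $H'$ and gluing them along the triangle $(u',v',w')$, exactly as in the proof of Lemma~\ref{lem:onecut}.

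The case analysis would split on where the prescribed face $(u,v,w)$ lies. If $\{u,v,w\}=\{u',v',w'\}$, then $(u,v,w)$ is itself the chosen $3$-cut; since $(u,v,w)$ is a triangular face, in one of the two edge closed components it bounds a face and we can ask Lemma~\ref{lem:onecut}\,(ii) (applied to that component, which has at most one $3$-cut, after checking the degree hypothesis) for a hamiltonian path avoiding the triangle's edges in one side and a hamiltonian cycle through, say, two of the triangle's edges on the $4$-connected side $H$ via Corollary~\ref{cor:oneedgetriangle_deg3} or Theorem~\ref{thm:2edgesplus}, then glue. If instead $(u,v,w)$ lies strictly inside one component, say $H'$ (the side with at most one $3$-cut), I would apply Lemma~\ref{lem:onecut}\,(i) to $H'$ to get a hamiltonian cycle of $H'$ through an edge of $(u,v,w)$ and through a chosen edge of the triangle $(u',v',w')$ — using the "another triangle $(\bar u,\bar v,\bar w)$" strengthening of that lemma with $(\bar u,\bar v,\bar w)=(u',v',w')$ — and then replace that edge (or the two-edge path, if the cycle uses two edges of $(u',v',w')$) by a spanning path of $H$ obtained from Theorem~\ref{thm:2edgesplus} resp. Corollary~\ref{cor:oneedgetriangle_deg3}. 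If $(u,v,w)$ lies inside $H$, the roles of $H$ and $H'$ are symmetric enough: get a hamiltonian cycle of $H$ through an edge of $(u,v,w)$ and through an edge of $(u',v',w')$, then close it up using a spanning path of $H'$ (Lemma~\ref{lem:onecut}, again in its strengthened form).

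A recurring technical wrinkle is the degree hypothesis "$\deg(v)\ge 4$" required by Lemma~\ref{lem:onecut} and the "$v$ of degree at least $4$" hypotheses of Theorem~\ref{thm:2edgesplus} and Corollary~\ref{cor:oneedgetriangle_deg3}: a triangular face may a priori have all three vertices of degree $3$. Here Remark~\ref{rem:notwo3intriangle} is the tool — in an essentially $4$-connected polyhedron with more than $6$ vertices no two degree-$3$ vertices share a triangle edge, so any triangular face has at least two vertices of degree $\ge 4$, and we may name $v$ to be one of them (and handle the finitely many small graphs, and graphs that are $K_4$, directly). One must also be careful that when $\{u,v,w\}$ meets $\{u',v',w'\}$ in one or two vertices, the triangle $(u,v,w)$ genuinely survives as a facial triangle in whichever edge closed component we send it to; Lemma~\ref{lem:edgeclosed} guarantees the components are polyhedra and facial structure is preserved, so this is routine but needs stating.

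The main obstacle I expect is the bookkeeping of \emph{which} edge of the glue triangle $(u',v',w')$ each side uses, so that the two partial solutions are genuinely compatible (they must share exactly one or two consecutive edges of that triangle, and the spanning-path replacements must be set up so the glued object is a single cycle, not two). This is precisely the argument already carried out inside the proof of Lemma~\ref{lem:onecut}, so the work is in verifying that the same case split (one shared edge versus two shared edges of $(u',v',w')$) goes through when the prescribed triangular face $(u,v,w)$ is an arbitrary triangular face of $G$ rather than one forced to lie on a particular side; once the side containing $(u,v,w)$ is fixed, the asymmetry between "$4$-connected side" and "at most one $3$-cut side" is resolved by invoking Lemma~\ref{lem:onecut} on the latter and Theorem~\ref{thm:2edgesplus}/Corollary~\ref{cor:oneedgetriangle_deg3} on the former, in whichever order matches where $(u,v,w)$ sits.
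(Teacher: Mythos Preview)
Your overall strategy is sound and would go through, but the paper's argument is considerably shorter because of three simplifications you miss.

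First, rather than invoking Lemma~\ref{lem:existstrivecc}, the paper disposes of the case where \emph{all} $3$-cuts are trivial directly via Theorem~\ref{thm:2edgesplus}, and in the remaining case picks any \emph{nontrivial} $3$-cut $\{u',v',w'\}$. Both edge closed components then have at least five vertices, which eliminates your $K_4$ and small-graph special cases entirely.

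Second, the paper uses $(u',v',w')$ as the \emph{main} triangle in both applications of Lemma~\ref{lem:onecut} and treats $(u,v,w)$ as the auxiliary triangle $(\bar u,\bar v,\bar w)$ --- the reverse of your setup. This makes the gluing automatic: part~(i) on the side $G'$ containing $(u,v,w)$ yields a path from $u'$ to $w'$ through every vertex except $v'$, using no edge of $(u',v',w')$ but some edge of $(u,v,w)$; part~(ii) on $G''$ yields a hamiltonian path from $u'$ to $w'$ also avoiding the edges of $(u',v',w')$. Their union is immediately the desired hamiltonian cycle. There is no case split on which side contains $(u,v,w)$ and none of your ``one shared edge versus two shared edges'' bookkeeping.

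Third, the degree hypothesis $\deg(v')\ge 4$ is obtained by counting $3$-cuts, not via Remark~\ref{rem:notwo3intriangle}. That remark concerns essentially $4$-connected polyhedra and does not apply to your $H'$, which may have a nontrivial $3$-cut. The correct argument: since the components together carry at most one $3$-cut and each has at least five vertices, at most one of $u',v',w'$ can have degree~$3$ in either component (a degree-$3$ vertex yields a trivial $3$-cut), so some $v'$ has degree $\ge 4$ on both sides.

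Finally, your case $\{u,v,w\}=\{u',v',w'\}$ is vacuous: the three vertices of a $3$-cut never lie on a common face, as noted right after the definition of closed components.
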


\begin{proof}

If all $3$-cuts are trivial, this is a direct consequence of Theorem~\ref{thm:2edgesplus},
so suppose that $\{u',v',w'\}$ is a nontrivial $3$-cut, that $G'$, $G''$ are the two edge
closed components and that $G'$ contains the face $(u,v,w)$.

As $G'$ and $G''$ together have at most one $3$-cut, at most one of $u',v',w'$
has degree $3$ in one of them,
so assume that $\deg_{G'}(v')\ge 4$ and $\deg_{G''}(v')\ge 4$.
Lemma~\ref{lem:onecut} (i) implies that there is a path from $u'$ to $w'$ in $G'$
containing all vertices but $v'$ that contains also an edge of $(u,v,w)$.
In $G''$ we can apply Lemma~\ref{lem:onecut} (ii)
to conclude that $G''$ contains a hamiltonian path from $u'$ to $w'$ using
only edges of $G''$ not contained in the triangle $(u',v',w')$.

The union of these paths gives the hamiltonian cycle with the described properties.

\end{proof}

\begin{theorem}\label{thm:3cuts}

Every polyhedron $G=(V,E)$ with at most three $3$-cuts is hamiltonian.

\end{theorem}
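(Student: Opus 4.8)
The plan is to reduce the general statement to the previously established lemmas by an induction on the number $k$ of $3$-cuts, using Lemma~\ref{lem:existstrivecc} to peel off a "clean" edge closed component. For $k\le 2$ the result follows from the theorem after Theorem~\ref{thm:RYU} (or directly from Lemma~\ref{lem:twocuts} once one exhibits a triangular face), so assume $G$ has exactly three $3$-cuts. By Lemma~\ref{lem:existstrivecc} there is a $3$-cut $\{u',v',w'\}$ such that one edge closed component $G'$ of $G-\{u',v',w'\}$ has no $3$-cuts, i.e. $G'$ is $4$-connected or $G'\cong K_4$. Let $G''$ be the other edge closed component. By Lemma~\ref{lem:edgeclosed}, $G'$ and $G''$ together have at most two $3$-cuts, and since $G'$ contributes none, \emph{all} of them lie in $G''$; moreover $\{u',v',w'\}$ is a facial triangle in both. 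The strategy is then to find a hamiltonian path of $G'$ between two vertices of $\{u',v',w'\}$ that avoids all edges of the triangle $(u',v',w')$, find a complementary hamiltonian path of $G''$ between the remaining configuration of the triangle, and glue them along $(u',v',w')$.

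First I would handle $G'$. If $G'\cong K_4$, say with the fourth vertex $z$, then $u',z,v'$ — $w'$ is not quite what we want; instead note the path $w',u',z,v'$ is a hamiltonian path of $G'$ from $w'$ to $v'$ containing the edge $\{u',w'\}$ of the triangle, or we can route $u',z,w'$ with $v'$ left for $G''$; one checks by hand that the needed configuration is available. If $G'$ is $4$-connected, then every vertex of $G'$ has degree at least $4$, and in particular $\deg_{G'}(v')\ge 4$; applying Lemma~\ref{lem:onecut}(ii) to $G'$ with the triangular face $(u',v',w')$ — or Corollary~\ref{cor:oneedgetriangle_deg3}, since $G'$ has no vertex of degree $3$ — yields a hamiltonian path of $G'$ from $u'$ to $w'$ using no edge of $(u',v',w')$. (If one wants an edge of some distinguished triangle of $G$ that happens to lie in $G'$ to be used, Corollary~\ref{cor:oneedgetriangle_deg3} supplies that too, but for mere hamiltonicity it is not needed.)

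Next I would handle $G''$, which is a polyhedron with at most two $3$-cuts containing $(u',v',w')$ as a facial triangle. By Lemma~\ref{lem:twocuts} there is a hamiltonian cycle $H''$ of $G''$ containing an edge of $(u',v',w')$; after a cyclic relabelling we may assume that edge is $\{u',w'\}$. Deleting $\{u',w'\}$ from $H''$ gives a hamiltonian path of $G''$ from $u'$ to $w'$. Concatenating this path with the hamiltonian path of $G'$ from $u'$ to $w'$ produced above yields a cycle through every vertex of $G'\cup G'' = V$, i.e. a hamiltonian cycle of $G$, provided the two paths share no interior vertices (true, since they meet only in $\{u',v',w'\}$, and there only at the endpoints $u',w'$) and provided they do not duplicate the edge $\{u',w'\}$ — but the $G'$-path was chosen to avoid every edge of $(u',v',w')$, so this is automatic. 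The one subtlety to verify is that the $G''$-path can be forced to be between two \emph{specified} vertices matching the endpoints of the $G'$-path: Lemma~\ref{lem:twocuts} only promises \emph{some} edge of the triangle, so one must argue that by symmetry of the roles of $u',v',w'$ in $G'$ we may always relabel so the endpoints agree — this is where I expect the main bookkeeping obstacle to be, and it is resolved by observing that Lemma~\ref{lem:onecut}(ii) / Corollary~\ref{cor:oneedgetriangle_deg3} applied to $G'$ can target any pair among $\{u',v',w'\}$ (choosing which vertex of the triangle plays the role of the degree-$\ge 4$ apex $v$), so we first read off which edge of the triangle $H''$ uses and then produce the matching $G'$-path. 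This completes the induction and the proof.
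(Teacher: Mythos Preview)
Your overall route is essentially the paper's Case~A argument, streamlined by using Lemma~\ref{lem:existstrivecc} to always land on a cut with one $4$-connected (or $K_4$) side; that is a legitimate shortcut and even lets you skip the paper's separate treatment of the case where both edge closed components carry one $3$-cut. However, there is a genuine gap in the gluing step.

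You produce, via Lemma~\ref{lem:onecut}(ii) (or Corollary~\ref{cor:oneedgetriangle_deg3}), a \emph{hamiltonian} path of $G'$ from $u'$ to $w'$. A hamiltonian path of $G'$ contains every vertex of $G'$, in particular $v'$. On the other side you take the hamiltonian cycle $H''$ of $G''$ from Lemma~\ref{lem:twocuts}, delete the edge $\{u',w'\}$, and obtain a hamiltonian path of $G''$ from $u'$ to $w'$; this path also contains $v'$. Concatenating the two therefore visits $v'$ twice and is not a cycle. Your parenthetical justification ``they meet only in $\{u',v',w'\}$, and there only at the endpoints $u',w'$'' is simply false: $v'$ is an interior vertex of both paths. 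Interestingly, in your $K_4$ paragraph you do gesture at the correct object (``route $u',z,w'$ with $v'$ left for $G''$''), but in the $4$-connected case you invoke the wrong lemma.

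The repair is short but essential: on the clean side you do not want a hamiltonian path of $G'$ but a path from $u'$ to $w'$ through all vertices of $G'$ \emph{except} $v'$. This is exactly what Lemma~\ref{lem:onecut}(i) (or Theorem~\ref{thm:2edgesplus} directly, since $G'$ has no $3$-cuts) provides, once you note that $\deg_{G'}(v')\ge 4$ in the $4$-connected case. With this change your relabelling argument at the end is fine: first read off which edge of $(u',v',w')$ the cycle $H''$ uses, call its endpoints $u',w'$, and then choose the remaining vertex of the triangle to play the role of the apex $v$ in Lemma~\ref{lem:onecut}(i). The paper does precisely this (with the labels of $G'$ and $G''$ exchanged), citing Theorem~\ref{thm:2edgesplus} for the path that omits one cut vertex.
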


\begin{proof}

If all $3$-cuts are trivial, this is a direct consequence of Theorem~\ref{thm:2edgesplus}
with $t=2$ and choosing one of the vertices with degree $3$ as $u$.

So assume that there is a nontrivial cut $\{u,v,w\}$. Splitting $G$ at this cut we get
two edge closed components $G'$ and $G''$ with together at most two
$3$-cuts.

If one of them -- w.l.o.g. $G''$ -- has no $3$-cuts, then $G'$ has at most
two $3$-cuts and we can apply Lemma~\ref{lem:twocuts}.
We conclude that $G'$ has a hamiltonian cycle containing an edge of $(u,v,w)$.
Assume that $\{u,w\}$ is contained.
As $G''$ does not have $3$-cuts, we can
replace $\{u,w\}$ by a path from $u$ to $w$ in $G''$ containing all vertices but $v$
(Theorem~\ref{thm:2edgesplus}).

So assume that $G'$ and $G''$ both have one $3$-cut. This implies that
in each of $G', G''$ at most one
of $u,v,w$ can have degree $3$. So one vertex -- w.l.o.g. $v$ has degree $4$
in both edge-closed components. Then Lemma~\ref{lem:onecut} implies that
$G'$ has a path
from $u$ to $w$ containing
no edges of $(u,v,w)$ and all vertices but $v$ and that
$G''$ has a hamiltonian path from
$u$ to $w$ containing no edges of $(u,v,w)$. Combining these paths gives
a hamiltonian cycle of $G$.

\end{proof}

\begin{corollary}

If all polyhedra with at most $k$ $3$-cuts are hamiltonian, then all polyhedra with at
most $(k+1)$ $3$-cuts are traceable (that is: contain a hamiltonian path).

Especially: all polyhedra with at most four $3$-cuts are traceable.

\end{corollary}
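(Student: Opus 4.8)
The plan is to derive traceability for $k+1$ $3$-cuts from hamiltonicity for $k$ $3$-cuts by running the one-vertex insertion from the proof of Lemma~\ref{lem:remove_vertices} backwards. First I would dispose of the easy case: if $G$ has at most $k$ $3$-cuts it is hamiltonian by hypothesis, hence traceable (remove one edge of a hamiltonian cycle), so it suffices to treat a polyhedron $G$ with exactly $k+1$ $3$-cuts; such a $G$ certainly has a $3$-cut.

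Next I would fix a $3$-cut $\{u,v,w\}$ of $G$ and label it as in the proof of Lemma~\ref{lem:remove_vertices}: if $\{u,v,w\}$ is not a separating triangle, choose the names so that $\{u,v\}\notin E$; otherwise the labelling is arbitrary. By Corollary~\ref{cor:faceroundedge}, $G-\{\{u,v\}\}$ has a face $u,u_1,\dots,u_p,v,v_1,\dots,v_q$ with $p,q\ge 1$ in which the $u_i$ and the $v_j$ lie in the two different components of $G-\{u,v,w\}$. I would insert a new vertex $z$ in the interior of this face, joined to all $p+q+2$ boundary vertices, obtaining a graph $G^{+}$. The argument in the proof of Lemma~\ref{lem:remove_vertices} applies unchanged and gives: $G^{+}$ is a polyhedron, $z$ lies in no trivial cut, $\{u,v,w\}$ is no longer a cut of $G^{+}$, and every $3$-cut of $G^{+}$ is already one of $G$; hence $G^{+}$ has at most $(k+1)-1=k$ $3$-cuts. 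Also $G^{+}-z=G-\{\{u,v\}\}$, which is a spanning subgraph of $G$.

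Then by hypothesis $G^{+}$ has a hamiltonian cycle $H$, and since $\deg_{G^{+}}(z)=p+q+2\ge 4$, $H$ meets $z$ in exactly two edges, say $\{z,a\}$ and $\{z,b\}$. Deleting $z$ turns $H$ into a path from $a$ to $b$ through every vertex of $G^{+}-z$, i.e.\ through every vertex of $G$, and all of its edges lie in $G-\{\{u,v\}\}\subseteq G$; this is the desired hamiltonian path of $G$. Applying the implication with $k=3$ together with Theorem~\ref{thm:3cuts} yields the final assertion.

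The only real content is the $3$-cut bookkeeping quoted in the middle paragraph, which is exactly the inductive step of Lemma~\ref{lem:remove_vertices}; I anticipate no extra obstacle there. The two things to stay alert to are that I am invoking the one-vertex reduction hidden inside that proof rather than the spanning-subgraph statement it concludes with, and that the separating-triangle subcase is already handled in that proof.
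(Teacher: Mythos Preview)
Your argument is correct and takes a genuinely different route from the paper's proof. The paper decomposes $G$ at a $3$-cut supplied by Lemma~\ref{lem:existstrivecc}, so that one edge-closed component $G'$ is $4$-connected (or $K_4$) and the other $G''$ has at most $k$ $3$-cuts; it then takes a hamiltonian cycle of $G''$ (from the hypothesis), produces via Theorem~\ref{thm:2edgesplus} a path in $G'$ through all vertices but one cut vertex, and splices the two together with a three-case analysis depending on how many edges of the triangle $(u,v,w)$ the cycle in $G''$ uses. Your proof instead recycles the single-vertex insertion from the inductive step of Lemma~\ref{lem:remove_vertices}: add $z$ to drop the $3$-cut count to $k$, take a hamiltonian cycle of $G^{+}$, and delete $z$ to get a hamiltonian path of $G$. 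This is shorter, avoids both Lemma~\ref{lem:existstrivecc} and the case split, and never touches Theorem~\ref{thm:2edgesplus}; the trade-off is that you invoke the \emph{construction} inside the proof of Lemma~\ref{lem:remove_vertices} rather than its stated conclusion, and you do not get the extra information the paper extracts (in one of its three cases it actually obtains a hamiltonian cycle, not merely a path). Two minor remarks: the clause ``since $\deg_{G^{+}}(z)\ge 4$'' is unnecessary, as every vertex of a cycle has exactly two cycle edges; and the fact that $G^{+}$ is $3$-connected (hence a polyhedron) is used implicitly both here and in the paper---it follows easily since $G-\{\{u,v\}\}$ is $2$-connected and $z$ is adjacent to both $u$ and $v$.
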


\begin{proof}

Let $G=(V,E)$ be a polyhedron with at most $(k+1)$ $3$-cuts.

Theorem~\ref{thm:2edgesplus} implies
that for any
two vertices $u,v$ in a triangle $(u,v,w)$ of a $4$-connected polyhedron,
there is a path from $u$ to $v$ through all vertices but $w$.
The same is obviously also true for $K_4$.

Let $\{u,v,w\}$ be a $3$-cut in $G$ so that one of the edge
closed components, say $G'$, has no $3$-cut. The other
edge closed component $G''$ has at most $k$ $3$-cuts,
so there is a hamiltonian cycle $H$ in $G''$. There are 3 possibilities
how this hamiltonian cycle passes through edges of the triangle
$(u,v,w)$:

\begin{itemize}

\item $H$ uses no edge of $u,v,w$:\\
So w.l.o.g. $H=u,u_1, \dots ,v,v_1, \dots ,w,w_1,\dots ,u$.
Combining $H-\{\{u,u_1\},\{v,v_1\}\}$ with a path
in $G'$ from $u$ to $v$ through all vertices but $w$ gives a
hamiltonian path in $G$ from $u_1$ to $v_1$.

\item $H$ uses only the edge $\{u,v\}$ of $u,v,w$:\\
Combining $H-\{\{u,v\}\}$ with a path
in $G'$ from $u$ to $v$ through all vertices but $w$
even gives a hamiltonian cycle in $G$.

\item $H$ uses the edges $\{u,v\},\{v,w\}$ of $u,v,w$:\\
Combining $H-\{\{u,v\},\{v,w\}\}$ with a path
in $G'$ from $u$ to $v$ through all vertices but $w$
gives a
hamiltonian path in $G$ from $v$ to $w$.

\end{itemize}

\end{proof}

On the other hand one can prove:

\begin{lemma}

For each $d\ge 8$ there exist non-traceable triangulations with $d$ $3$-cuts.

\end{lemma}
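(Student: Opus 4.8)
The plan is to construct, for each $d\ge 8$, a triangulation $T_d$ that fails to be traceable, using a "spider with heavy legs'' construction whose ends cannot all be accommodated by a single path. First I would take a small central triangulated piece $C$ and attach to it several disjoint "pendant'' pieces, each separated from the rest by a $3$-cut, in such a way that each pendant piece absorbs a full $3$-cut's worth of structure and forces any hamiltonian path to have an endpoint inside it. The classical obstruction to traceability here is the one used for non-hamiltonicity: if a graph has a set $S$ of $s$ vertices whose deletion leaves more than $s+1$ components, then $G$ has no hamiltonian path (a hamiltonian path visits the components of $G-S$ in an order that alternates with vertices of $S$, so at most $s+1$ components are possible). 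So the concrete goal is: build a triangulation with a separating set $S$ of size $s$ whose removal yields at least $s+2$ components, while keeping the number of $3$-cuts controlled and reaching every value $d\ge 8$.

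The cleanest way to realize this is to replace, in a fixed triangulation, a vertex $x$ of degree $m$ by a "widget'' that is pinned to $x$'s three alternating neighbours but is internally $4$-connected except for one $3$-cut, so each widget contributes a bounded number of $3$-cuts and one extra component after deleting a common cut set. Concretely I would start from a triangulation $R$ containing a vertex $r$ all of whose neighbours we will use as the cut set $S = N(r)$; embedding $k$ vertices of degree $3$ into distinct faces incident to $r$, or subdividing faces around $r$, gives $G - S$ a large number of components (one for $r$ itself, one for the "outside,'' and one for each added gadget), so with $|S| = \deg(r)$ suitably large we get more than $|S|+1$ components. The $3$-cuts of such a configuration are exactly the neighbourhoods of the degree-$3$ vertices plus possibly $N(r)$ itself; by choosing how many degree-$3$ vertices to insert and whether $N(r)$ is itself a separating triangle, I can tune the total number of $3$-cuts to hit every integer $d \ge 8$. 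One must also verify the graph is a triangulation (all faces triangles) and is $3$-connected, which for these explicit local modifications is a routine check, and that no \emph{unintended} $3$-cuts are created — this is the bookkeeping analogue of the arguments already carried out in Lemma~\ref{lem:remove_vertices}.

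The main obstacle I expect is the simultaneous control of two competing quantities: making $G-S$ have many components (which wants many small pendant gadgets, each creating a $3$-cut) while not overshooting the target number $d$ of $3$-cuts, and doing this for \emph{every} $d\ge 8$ rather than just infinitely many. The flexibility comes from having two "knobs'': a gadget that adds exactly one $3$-cut and one component, versus a larger gadget (e.g. a $4$-connected block glued along a triangle) that adds one component but zero new $3$-cuts beyond the gluing triangle. By combining $a$ copies of the first and $b$ copies of the second, arranged around $r$, I get $|S|+1 \ge$ (number of components) forced to fail while the $3$-cut count is an affine function of $a$ and $b$; checking that the reachable set of pairs (component surplus, $3$-cut count) covers all $d\ge 8$ with a valid triangulation is the delicate part, and the threshold $8$ presumably falls out precisely from the smallest configuration for which the surplus of components strictly exceeds $1$ while $S$ is large enough to be a genuine cut. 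I would finish by exhibiting the smallest case $d=8$ explicitly and then describe the two operations that increase $d$ by $1$ while preserving non-traceability.
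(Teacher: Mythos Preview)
Your instinct to use the scattering-number obstruction --- find a set $S$ with $c(G-S) \ge |S|+2$ --- is exactly right, and is what the paper does. But your concrete construction has a flaw: if you insert a degree-$3$ vertex $v$ into a face \emph{incident to $r$}, then $v$ is adjacent to $r$, so after deleting $S = N(r)$ the vertex $v$ lies in the \emph{same} component as $r$, not a separate one. Your claimed component count ``one for $r$, one for the outside, one for each gadget'' therefore does not hold. To repair this you would have to insert into faces meeting $N(r)$ but not $r$ itself, and then the bookkeeping of degrees, components, and unintended $3$-cuts becomes exactly the delicate mess you anticipated.

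The paper sidesteps all of this with a much simpler choice of $S$: take any $4$-connected triangulation $T$ on $n$ vertices (a double wheel suffices), which has $2n-4$ faces, and insert a degree-$3$ vertex into each face. The resulting triangulation $T'$ has exactly $d = 2n-4$ $3$-cuts --- one per inserted vertex, and no others because $T$ is $4$-connected and every inserted vertex still has a neighbour in $T-C$ for any other triple $C$. Now take $S = V(T)$, the \emph{entire} original vertex set. Then $T' - S$ is just the $d$ isolated inserted vertices, so $c(T'-S) = d = 2n - 4 \ge n + 2 = |S| + 2$ as soon as $n \ge 6$, i.e.\ $d \ge 8$. No distinguished centre vertex, no ``two knobs'', no risk of stray $3$-cuts.

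One point you were right to worry about and the paper glosses over: as written this gives only even $d$. For odd $d \ge 9$ one inserts into all but one face of a $4$-connected triangulation with $d+1$ faces; then $|S| = (d+5)/2$ and $c(T'-S) = d \ge |S|+2$ still holds, with exactly $d$ $3$-cuts by the same argument. So your concern about hitting every $d\ge 8$ was legitimate, but it is resolved by a one-line variant of the same construction rather than a second family of gadgets.
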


\begin{proof}
For each $n\ge 6$ there is a double wheel that provides an example of a
$4$-connected triangulation with $n$ vertices and $2n-4$ triangular faces.


For even $d\ge 8$ take a double wheel (or any other $4$-connected triangulation)
$T$ with $d$
triangular faces. $T$ has
$n= \frac{d}{2} +2$ vertices. As $d\ge 8$ we have $d - n \ge 2$. If we insert single vertices
into $d$ triangular faces of $T$ and connect them to the vertices in the
surrounding triangle, we obtain a triangulation $T'$, that has exactly $d$
$3$-cuts: each triangle into which a vertex was inserted is a $3$-cut
and for each other set $C$ of $3$ vertices, the subgraph $T$ is still connected and
each new vertex that is not in $C$ is connected to at least one
remaining vertex in $T-C$ -- so $T'-C$ is connected. This proves that the
$d$ triangles form the only $3$-cuts.

Removing the $n$ vertices of $T$, we have $d\ge n+2$ components left, which proves
that there can not be a hamiltonian path as it would have to pass all $d$ components
with less than $d-1$ intermediate points from $T$, which is obviously impossible.

Denoting the number of components
of a graph as  $c(G)$, Hendry \cite{scattering} defined the {\em scattering number} $s(G)$ as
$s(G)=\max\{c(G-X)-|X| \quad | \quad X\subset V(G), c(G-X)\ge 2\}$. The
scattering number can be considered as a generalization of the concept
of toughness. So in the language of Hendry \cite{scattering} we have
proven that $s(T')\ge 2$ and illustrated the well known fact that it has to
be at most $1$ to be traceable.  Graphs with scattering number at
least $2$ can be considered trivially non-traceable.

\end{proof}

\section{The toughness and scattering number of polyhedra with few 3-cuts}\label{sec:structure}

As for $6$ or more $3$-cuts there are non-hamiltonian polyhedra even in
the subclass of triangulations \cite{towardsWhitney}, the only remaining cases for which it
is not decided whether non-hamiltonian polyhedra with these numbers of
cuts exist are four or five $3$-cuts. For traceability the undecided cases
are five, six or seven $3$-cuts. In this chapter we will prove that there
are no trivially non-hamiltonian (not $1$-tough) polyhedra with four or five $3$-cuts
and no trivially non-traceable (scattering number at least two)
polyhedra with five, six or seven $3$-cuts.

In \cite{towardsWhitney} the authors
prove that triangulations with $4$ or $5$ $3$-cuts are necessarily
$1$-tough, which means that no easy counterexamples exist. This
result is based on the following lemma.

A graph $G=(V,E)$ is $1$-tough if $s(G)\le 0$, so for all cutsets $S\subset V$ of $G$ we have that $c(G-S)\le |S|$.
Note that in a triangulation a $3$-cut consists of vertices of a separating triangle and the vertices of each
separating triangle form a $3$-cut.

\begin{lemma} \label{rem:1toughk} \cite{towardsWhitney}

In a triangulation $T=(V,E)$ with at most $d$ separating triangles we have
for each splitting set $S\subset V$: $c(T-S) \le  |S|-2+\lfloor \frac{d}{2}\rfloor $.

So for $d\le 5$ we have $c(T-S) \le |S|$ which implies that triangulations with at most
$5$ separating triangles are $1$-tough.

For $d\le 7$ we have $c(T-S) \le |S|+1$ which implies that for triangulations with at most
seven $3$-cuts the scattering number is at most $1$.

\end{lemma}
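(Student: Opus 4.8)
The plan is to reduce the statement to an edge count in an auxiliary planar bipartite graph. Given a splitting set $S$, let $C_1,\dots,C_c$ (with $c\ge 2$) be the components of $T-S$, and build a graph $B$ whose vertices are $C_1,\dots,C_c$ together with the vertices of $S$, joining $C_i$ to $s\in S$ exactly when $s$ has a neighbour in $C_i$. Contracting each (connected) $C_i$ inside $T$ and then discarding all edges lying inside $S$ and all resulting parallel edges realises $B$ as a simple planar bipartite graph. Deleting from $B$ its isolated vertices (these lie in $S$ and only make the bound below more slack) leaves a graph in which every remaining component contains some $C_i$ and hence has at least four vertices, so, summing the bound ``at most $2m-4$ edges on $m\ge 3$ vertices'' over the components, $|E(B)|\le 2(c+|S|)-4$.

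Next I would use $3$-connectedness: every $C_i$ has $|N(C_i)|\ge 3$, where $N(C_i)\subseteq S$. Writing $r$ for the number of indices $i$ with $|N(C_i)|=3$, the remaining $c-r$ components each contribute at least $4$ to $\deg_B(C_i)$, so $|E(B)|=\sum_{i=1}^{c}|N(C_i)|\ge 3r+4(c-r)=4c-r$. Comparing with the previous bound gives $4c-r\le 2c+2|S|-4$, i.e.\ $c\le |S|-2+\tfrac r2$, and since $c$ is an integer, $c\le |S|-2+\lfloor r/2\rfloor$. This part uses nothing beyond $3$-connectedness and planarity, so it holds for every polyhedron and is essentially routine.

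It remains to bound $\lfloor r/2\rfloor$ by $\lfloor d/2\rfloor$, and this is where the triangulation hypothesis — and the real difficulty — enters. If $|N(C_i)|=3$, then $N(C_i)$ is a minimal $3$-cut of $T$ (minimal because $T$ has no $1$- or $2$-cut), and, $T$ being a triangulation, its three vertices are pairwise adjacent, so $N(C_i)$ is a separating triangle. A separating triangle $\Delta$ has exactly two sides, and on each side at most one component can be adjacent to all of $\Delta$; hence $\Delta$ can equal $N(C_i)$ for at most two of the $C_i$, one on each side. If no separating triangle is used twice this gives $r\le d$ and we are done. The hard part — and the main obstacle I expect — is the bookkeeping for a separating triangle $\Delta$ used by two components $C_i,C_j$, one on each side: one has to show that such ``sharing'' either forces an essentially rigid small configuration (of the type $S=\Delta$, $|S|=3$, $c=2$, to be disposed of by hand) or is compensated by a further separating triangle on each side, so that the number of triangles actually used still does not exceed $d$. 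Once $c\le |S|-2+\lfloor d/2\rfloor$ is in hand, substituting $d=5$ and $d=7$ yields the two asserted consequences about $1$-toughness and the scattering number.
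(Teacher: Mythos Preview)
The paper does not prove this lemma; it is quoted from \cite{towardsWhitney} and used as a black box. So there is no proof in the paper to compare against, and I will simply comment on your argument.

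Your reduction to the planar bipartite graph $B$ and the resulting inequality $c \le |S|-2+\lfloor r/2\rfloor$ are correct and clean. The step you flag as the ``hard part'' is in fact easier than you fear, but it hides a small surprise.

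Here is the observation that closes the gap. Suppose $|N(C_i)|=3$, say $N(C_i)=\Delta=\{a,b,c\}$, with $C_i$ in the interior of the separating triangle $\Delta$. Let $T_{\mathrm{int}}$ be the edge-closed component of $T-\Delta$ on that side; by Lemma~\ref{lem:edgeclosed} it is a plane triangulation whose outer face is $\Delta$. In $T_{\mathrm{int}}$ the triangle $\Delta$ is \emph{facial}, hence not separating, so $T_{\mathrm{int}}-\Delta$ (the set of all interior vertices of $\Delta$) is connected. Since $C_i$ has no neighbours among these interior vertices outside $C_i$ itself, $C_i$ must equal the whole interior. Thus whenever $N(C_i)=\Delta$, the component $C_i$ is an \emph{entire side} of $\Delta$.

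Two things follow. First, the map $C_i\mapsto N(C_i)$, restricted to the $r$ components with three neighbours, is at most two-to-one, as you said. Second --- and this kills your ``compensation'' alternative --- if it is ever two-to-one at some $\Delta$, with $C_i$ the interior and $C_j$ the exterior, then $V(T)=C_i\cup C_j\cup\Delta$, forcing $S=\Delta$, $|S|=3$, $c=r=2$. Sharing always collapses to this single rigid configuration; there is no bookkeeping to do.

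In that configuration the claimed bound reads $2\le 1+\lfloor d/2\rfloor$, i.e.\ $d\ge 2$. This is the surprise: as literally stated, the lemma is \emph{false} for $d=1$. Take the octahedron with one extra degree-$3$ vertex inserted in a face; it has exactly one separating triangle $\Delta$, and $S=\Delta$ gives $c(T-S)=2>1=|S|-2+\lfloor 1/2\rfloor$. Your argument, completed with the observation above, does prove the lemma for $d=0$ and for every $d\ge 2$; since the paper only invokes it with $d\in\{5,7\}$, nothing downstream is affected.
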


One may hope that as polyhedra can have much less edges than triangulations,
the situation is different and that there might be
polyhedra with four or five $3$-cuts that are not $1$-tough or polyhedra
with five, six or seven $3$-cuts and scattering number at least $2$.
We will now prove that this is not the case.

\begin{rem}\label{remark:totriang}

For each polyhedron $G=(V,E)$ with $d$ $3$-cuts
and each splitting set $S\subset V$, there is a triangulation
$T_{S,G}=(V,E')$ containing $G$ as a subgraph with at most $d$ $3$-cuts,
so that $c(T_{S,G}-S)=c(G-S)$.

\end{rem}

\begin{proof}

First note that for two non-adjacent vertices $v,v'$ of a face $f$ of $G$,
we have that $\{v,v'\}\not\in E$, as otherwise $\{v,v'\}$ would be a 2-cut.
This implies that we can add the edge $\{v,v'\}$ inside $f$ without producing
double edges. Adding an edge, no new cuts can occur, so that the number
of $3$-cuts can not increase.

Let $A_1,\dots ,A_k$ be the components of $G-S$. We call a pair $\{v,v'\}$
of vertices $v,v'\in V$ connecting if $v$ and $v'$ are in different components
of $G-S$ and non-connecting otherwise. Obviously all edges of $G$
are non-connecting. For any graph $G'=(V,E')$ containing $G$ that has only
non-connecting edges we have that $c(G'-S)=c(G-S)$.

Let now $T=(V,E')$ be a polyhedron that has only
non-connecting edges, contains $G$ and has the maximum number of edges
among all polyhedra with these properties.

Assume that $T$ has a face $f$ that is not a triangle. Then $f$ does not contain
vertices from $S$, as a vertex $v\in S$ can be connected to any other
vertex in $f$ -- except to its two neighbours -- by a non-connecting edge.
This is in contradiction to the maximality of $T$. The same would be true
if $f$ contained only vertices from the same component of $G-S$.
So $f$ contains vertices from different components of $G-S$. As there
are no edges in $G$ or $T$ between different components of $G-S$,
they must be separated by vertices of $S$. This contradicts the fact that
no vertices of $S$ are in $f$.

So all faces are triangles and $T$ is a triangulation with the desired properties.

\end{proof}

Note that the triangulation $T_{S,G}$ is in general not uniquely determined.

\begin{corollary}

In a polyhedron $G=(V,E)$ with at most $d$ $3$-cuts we have
for each splitting set $S\subset V$: $c(G-S) \le  |S|-2+\lfloor \frac{d}{2}\rfloor $.

So for $d\le 5$ we have $c(G-S) \le |S|$ which implies that polyhedra with at most
five $3$-cuts are $1$-tough.
For $d\le 7$ we have $c(T-S) \le |S|+1$ which implies that for polyhedra with at most
seven $3$-cuts the scattering number is at most $1$.

\end{corollary}

\begin{proof}

With Remark~\ref{remark:totriang} and Lemma~\ref{rem:1toughk} this follows directly, as
for each set $S$ we have

$c(G-S) \le  c(T_{S,G}-S) \le |S|-2+\lfloor \frac{d}{2}\rfloor $.

\end{proof}

\section{Computational results}\label{sec:computational}

The case of four or five $3$-cuts is even in the subclass of triangulations, where one has
more information on the polyhedron, still undecided. One would expect that -- in case
polyhedra resp. triangulations with four or five $3$-cuts are all hamiltonian --
the proof for triangulations is easier and found first.

If on the other hand there are polyhedra with four or five $3$-cuts that are non-hamiltonian,
it is possible that they only exist outside the class of triangulations or
one might expect the size of a smallest counterexample outside the class of triangulations
to be smaller than the size of the smallest triangulation that is a counterexample, as
removing edges from triangulations (without increasing the number of $3$-cuts)
might destroy hamiltonian cycles. To this end or to get at least a lower bound
on the size of a possible non-hamiltonian polyhedron with four or five $3$-cuts, we implemented
a computer search. This computer search is based on the program {\em plantri} \cite{plantri}
to which we added a filter that allows to restrict the generation to polyhedra with a number
of $3$-cuts inside a given range (in our case $4$ or $5$). The program generates more than $500,000$
non-isomorphic polyhedra -- each with four or five $3$-cuts -- per second on an Intel Quad CPU Q8200
running at 2.33GHz.

In plantri polyhedra are generated by successively removing edges from triangulations
but keeping the graph $3$-connected. This implies that the number of $3$-cuts can never decrease
when removing edges and no further edges have to be removed in the recursive proces
when the number of $3$-cuts exceeds the upper limit. Polyhedra are output
only if the number of $3$-cuts lies within the range given by the upper and lower limit, but
polyhedra with fewer $3$-cuts than the lower limit also occur inside the generation process.

We count $3$-cuts by using the following easy lemma:

\begin{lemma}\label{lem:test}

Three vertices $u,v,w$ form a $3$-cut in a polyhedron $G=(V,E)$ if
and only if
any two of them share a face, but not all three of them share a face.

\end{lemma}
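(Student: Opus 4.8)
The plan is to prove both directions of the equivalence using the structural facts about $3$-cuts in polyhedra already established. Recall from the remarks following the definition of closed components that for a $3$-cut $\{u,v,w\}$ the set $G-\{u,v,w\}$ has exactly two components, and that $u,v,w$ cannot all lie on the boundary of a single face (otherwise a vertex placed in the interior of that face would yield a planar $K_{3,3}$). This immediately handles one half of the forward direction; the work is in showing pairwise face-sharing.

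First I would prove the ``only if'' direction. Suppose $\{u,v,w\}$ is a $3$-cut, with edge closed components $G'$ and $G''$. By Lemma~\ref{lem:edgeclosed}, in each of $G'$ and $G''$ the vertices $u,v,w$ form a facial triangle. Identifying $G'$ and $G''$ along this triangle (as in the proof of Corollary~\ref{cor:faceroundedge}) gives a planar supergraph $G^+ = (V, E\cup\{\{u,v\},\{v,w\},\{w,u\}\})$ in which $(u,v,w)$ is a separating triangle with the two components of $G-\{u,v,w\}$ on its two sides. In $G^+$ each of the three edges $\{u,v\}$, $\{v,w\}$, $\{w,u\}$ lies on exactly two faces, one on each side of the triangle. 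For the pair $u,v$: if $\{u,v\}\in E(G)$ we are already done, and if $\{u,v\}\notin E(G)$, then deleting the added edge $\{u,v\}$ from $G^+$ merges its two incident faces into a single face of $G$ whose boundary contains both $u$ and $v$ (this is exactly the face described in Corollary~\ref{cor:faceroundedge}). The same argument applies to the pairs $v,w$ and $w,u$. Hence any two of $u,v,w$ share a face of $G$, while all three do not, by the $K_{3,3}$ observation already recorded.

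For the ``if'' direction, suppose any two of $u,v,w$ share a face of $G$ but no face contains all three. I would argue that $\{u,v,w\}$ must be a cutset. Let $f_{uv}, f_{vw}, f_{wu}$ be faces containing the respective pairs; since no face contains all three, these are three distinct faces, and $v$ (say) lies on both $f_{uv}$ and $f_{vw}$ but not all three faces coincide. The idea is: walking around the faces incident to $v$ in the rotation, the neighbours of $v$ split into two arcs, one leading toward $u$ (via $f_{uv}$) and one toward $w$ (via $f_{vw}$); similarly for $u$ and $w$. One then shows that in $G-\{u,v,w\}$ the ``$u$-side'' and the ``$w$-side'' of these local pictures cannot be joined, using planarity: a connecting path, together with short paths inside the faces $f_{uv}, f_{vw}, f_{wu}$, would close up into a closed curve separating the plane in a way incompatible with $G$ being connected on both sides — more precisely, one obtains a planar embedding of a subdivision of $K_4$ with $u,v,w$ and an interior vertex, forcing the complement of $\{u,v,w\}$ to be disconnected. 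Alternatively, and perhaps more cleanly, I would add a new vertex in each of $f_{uv}, f_{vw}, f_{wu}$ (or just add the missing triangle edges, which is legitimate since non-adjacent vertices of a face are non-adjacent in a polyhedron, as noted in Remark~\ref{remark:totriang}) to realize $(u,v,w)$ as an actual triangle; if that triangle were non-separating the resulting graph would be $4$-connected near $\{u,v,w\}$, contradicting the fact that $u,v,w$ lie on three genuinely distinct faces in the original embedding.

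The main obstacle is the ``if'' direction: turning the intuitive planar ``three faces around a virtual triangle'' picture into a clean argument that $\{u,v,w\}$ actually separates $G$. The cleanest route is probably to add the (at most three) missing triangle edges inside the shared faces to get a supergraph $\hat G$ in which $(u,v,w)$ is a genuine triangle; observe that $\hat G$ is still a polyhedron (adding face-chords preserves $3$-connectivity and planarity) and that $(u,v,w)$ bounds a face on neither side precisely because $f_{uv}, f_{vw}, f_{wu}$ were distinct; conclude $(u,v,w)$ is a separating triangle of $\hat G$, hence $\{u,v,w\}$ is a $3$-cut of $\hat G$, and finally descend back to $G$ by checking that the two sides of the triangle remain internally connected and mutually separated when the added chords are deleted. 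The bookkeeping about which of the three edges were already present in $G$ is routine and I would not belabour it.
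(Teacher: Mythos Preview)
Your approach coincides with the paper's: for the forward direction you use the edge-closed-component triangle exactly as the paper does, and for the backward direction you add the missing chords inside the shared faces and argue that the resulting triangle is separating. Two simplifications the paper exploits that you should adopt: first, the reason $(u,v,w)$ cannot be a \emph{facial} triangle in $\hat G$ is simply that deleting the added chords would merge that face with its neighbours into a single face of $G$ containing all three of $u,v,w$, contradicting the hypothesis --- you do not need the somewhat vague appeal to ``$f_{uv},f_{vw},f_{wu}$ distinct''; second, once $\{u,v,w\}$ is a cut in the supergraph $\hat G$ it is automatically a cut in the subgraph $G$, so the ``descend back to $G$'' bookkeeping is unnecessary.
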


\begin{proof}

Assume first that $\{u,v,w\}$ is a $3$-cut. If all three shared a face, this would make a plane embedding of
$K_{3,3}$ possible, so they don't share a face. On the other hand the two edge closed components
can be identified along the triangles $(u,v,w)$ to give an embedding of  $(V,E\cup \{\{u,v\},\{v,w\},\{w,u\}\})$,
which is unique and in which any two of them share a face. So any two of them also share a face
in $G$, which can be obtained by possibly deleting edges.

Assume now that each two of them, but not all three share a face. Then we can connect each pair of
them (if not yet connected) by an edge inside the corresponding face and obtain a triangle
$(u,v,w)$. If this was a facial triangle, then all three of them would share a face, so it is a non-facial
triangle separating the inside from the outside and $u,v,w$ form a $3$-cut.

\end{proof}

For each triangulation we start with, we store for each vertex $v$ the set of vertices
that $v$ shares
a face with. This set is stored as a bit set. Furthermore we store each set of $3$ vertices
that share a face -- that is: all sets of facial triangles. Then the condition from Lemma~\ref{lem:test}
is tested for all triples $u<v<w$ of vertices and the $3$-cuts are counted.

For each number $i>0$ of edges removed from a triangulation the vector of bit sets coding for
each vertex the set of vertices it shares a face with is updated from the corresponding vector for
$i-1$. The only vertices for which the sets change are vertices in the new face that is the union
of the two faces $f_1,f_2$ on two sides of the edge just removed. Possible new $3$-cuts are only
$3$-cuts that contain one vertex $u$ from $f_1$ (but not $f_2$), one vertex $w$ from $f_2$ (but not $f_1$)
and one vertex neither in $f_1$, nor in $f_2$. If the vector storing the bit sets of vertices is \verb+bs[]+ --
so \verb+bs[u]+ is the set of all vertices that $u$ shares a face with -- and \verb+newface+ is a bit set
with all vertices in the new face, then for each $u,v$ as above counting the $3$-cuts they are involved in
comes down to counting the $1$-bits in \verb+bs[u] & bs[v] & ~newface+ which is a very
efficient operation. Note that all these $3$-cuts are new in the sense that they were no $3$-cuts before
the last edge was removed. If they had been $3$-cuts before, after removal the graph had at least $3$
components and would either allow a plane embedding of $K_{3,3}$ or have a $2$-cut.

When generating only polyhedra with four or five $3$-cuts, the time needed for determining
the number of $3$-cuts is about $5\%$ of the generation time. We tested the implementation
by comparing the results to those of a very simple filter that filtered all polyhedra by removing
all sets of three vertices and testing the rest for being connected. The results were compared
for all polyhedra on up to $15$ vertices (more than $25,000,000,000$ graphs) by once counting
$3$-cuts with the simple program and once generating only those with a given number of cuts.
There was complete agreement.

To check hamiltonicity, we added a filter to plantri.
The filter uses a simple branch and bound approach trying to
build a hamiltonian cycle by successively increasing a path. There are only two
look-aheads used. They are also implemented by very efficient bit operations.:

\begin{itemize}

\item If not all vertices have been visited and the start vertex of the path
has no neighbour in the set of unvisited vertices, the routine backtracks.

\item If one of the still unvisited vertices has less than two neighbours in
the set consisting of the still unvisited vertices and the start and end vertex
of the path, the routine backtracks.

\end{itemize}

The routine takes advantage of the way that plantri constructs
polyhedra: for triangulations a hamiltonian cycle is searched and
stored in case one is found.  As all other polyhedra are constructed
by removing an edge from the ancestor in the generation, a previously
constructed hamiltonian cycle may still be present.  So for polyhedra
that are not triangulations and with a hamiltonian ancestor, it is
first tested whether the last edge removed belonged to the hamiltonian
cycle stored for the ancestor of the polyhedron.  Only if this is not
the case, a new hamiltonian cycle is constructed and stored. The profile
for up to $15$ vertices showed that hamiltonicity testing took less than
$20\%$ of the total time, but the ratio increased slowly with the number of
vertices. The hamiltonicity testing routine was checked by comparing the
number of hamiltonian polyhedra on up to $15$ vertices to numbers
obtained by an extremely simple independent program. There was complete agreement.

Analogous to the case of four or five $3$-cuts where the existence of hamiltonian cycles
is still open, the existence of hamiltonian paths for five, six or seven $3$-cuts is still open.

The filter checking traceability is very similar to the one checking hamiltonicity, except that it starts at every
vertex and uses slightly modified bounding criteria.
With the set $S$ consisting of the still unvisited vertices and the end vertex of the path
(but not the start vertex), the only bounding criteria used are:

\begin{itemize}

\item If one of the still unvisited vertices has no neighbour in $S$
the routine backtracks.

\item If at least two of the still unvisited vertices have less than two neighbours in $S$, then
the routine backtracks.

\end{itemize}

The profile
for up to $14$ vertices showed that traceability testing took about
$22\%$ of the total time, but also here the ratio increased slowly with the number of
vertices. The traceability testing routine was checked by comparing the
number of non-traceable polyhedra on up to $15$ vertices to numbers
obtained by an extremely simple independent program. There was complete agreement.
The first
non-traceable polyhedra occur for $14$ vertices.

The result of the computations performed on a cluster of the HPC infrastructure at Ghent University
are:

\begin{lemma}

\begin{itemize}
\item There are no non-hamiltonian polyhedra with at most five $3$-cuts on up to $19$ vertices.

\item There are no non-traceable polyhedra with at most seven $3$-cuts on up to $18$ vertices.
\end{itemize}

\end{lemma}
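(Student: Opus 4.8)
The plan is to establish both claims by exhaustive computer search, relying on the structural observations already developed in the paper to make the search feasible. The key point is that \emph{plantri} generates polyhedra by successively deleting edges from triangulations while preserving $3$-connectivity, and—as noted earlier—the number of $3$-cuts is non-decreasing along this process, so once a partial object exceeds the target bound on $3$-cuts, the entire subtree of the generation can be pruned. Thus the computation reduces to: (i) enumerate all polyhedra on at most $19$ vertices (resp.\ $18$ vertices) whose number of $3$-cuts is at most five (resp.\ at most seven), counting $3$-cuts incrementally via Lemma~\ref{lem:test} and the bit-set update described above; and (ii) for each such polyhedron, run the branch-and-bound hamiltonicity (resp.\ traceability) filter, again exploiting that a hamiltonian cycle of the ancestor frequently survives the removal of one edge, so that a fresh search is only launched when the removed edge lay on the stored cycle.

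First I would run the combined generator-plus-filter on the HPC cluster, splitting the search over vertex counts and over the standard \emph{plantri} job-partitioning mechanism so that the computation distributes across many cores. For each order $n$ from the smallest relevant value up to $19$ (resp.\ $18$), the filter reports whether any polyhedron with the prescribed $3$-cut count fails to be hamiltonian (resp.\ traceable). The two bullet points then follow by observing that no such failure is reported for any $n$ in the stated range.

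Second—and this is really the crux of making the result trustworthy rather than the crux of the mathematics—I would carry out the validation already described: compare the $3$-cut counts produced by the fast incremental routine against a naive routine that simply deletes every triple of vertices and tests connectivity, for all polyhedra on up to $15$ vertices (over $2.5\times 10^{10}$ graphs), and likewise cross-check the hamiltonicity and traceability filters against completely independent brute-force implementations on the same range. Complete agreement on this large sample gives confidence that the pruning logic (in particular the claim that newly appearing $3$-cuts after an edge deletion involve exactly one vertex from each of the two merged faces and one outside vertex) is implemented correctly.

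The main obstacle is computational cost rather than proof difficulty: the number of polyhedra grows rapidly with $n$, and although the $3$-cut filter is cheap (about $5\%$ of generation time) and hamiltonicity testing is bounded (under $20\%$ at $n=15$, growing slowly), the traceability search starts from every vertex and is more expensive, which is why its bound is $18$ rather than $19$. Reaching even these bounds requires the incremental $3$-cut maintenance and the ancestor-cycle reuse to keep the per-graph work to a few bit operations; without them the search would be infeasible at this scale. There is no subtle case analysis here—the correctness of the conclusion rests entirely on the exhaustiveness of \emph{plantri}'s generation (a known, well-tested property) together with the validation experiments above.
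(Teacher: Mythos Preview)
Your proposal is correct and mirrors the paper's own approach essentially point for point: the paper likewise presents this lemma as the outcome of an exhaustive \emph{plantri}-based search with the incremental $3$-cut filter built on Lemma~\ref{lem:test}, the branch-and-bound hamiltonicity/traceability routines with ancestor-cycle reuse, and the same validation against brute-force implementations on up to $15$ vertices. There is no additional mathematical argument in the paper beyond what you describe.
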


The total CPU time for the search for non-hamiltonian polyhedra was approximately $3$ years.
During the test for $19$ vertices alone $45,849,541,741,643$ polyhedra were tested for their
number of cuts and the existence of a hamiltonian cycle.
The search for non-traceable polyhedra took about $250$ days and
$12,229,809,370,343$ polyhedra
on $18$ vertices were tested for their number of cuts and the existence of hamiltonian paths.

\section*{Acknowledgements}

The computational resources (Stevin Supercomputer Infrastructure) used
to obtain the extensive computational results were provided by Ghent
University, the Hercules Foundation and the Flemish Government –
department EWI. We want to thank Brendan McKay for helpful discussions
on this topic.

\end{document}